\def\cvd{~\vbox{\hrule\hbox{%
     \vrule height1.3ex\hskip0.8ex\vrule}\hrule } }
\newtheorem{thm}{Theorem}[section]
\newtheorem*{lemma*}{Lemma}
\newtheorem{cor}[thm]{Corollary}
\newtheorem{prop}[thm]{Proposition}
\newtheorem{defn}[thm]{Definition}
\newtheorem{remark}[thm]{Remark}
\numberwithin{equation}{section}
 \newcommand{\Real}{\mathbb{R}}
 \newcommand{\set}[1]{\left\{#1\right\}}
 \newcommand{\norm}[1]{\left\Vert#1\right\Vert}
\author{E. Glakousakis and S. Mercourakis}
\title{Examples of infinite dimensional Banach spaces without infinite equilateral sets}
\begin{document}
\maketitle
\thispagestyle{fancy}
\lfoot{\vspace{1.2cm}\small{\textit{2010 Mathematics Subject Classification: Primary 46B20; Secondary 46B04.}}\\
\textit{Keywords and phrases:} equilateral set}
\begin{abstract}
An example of an infinite dimensional and separable Banach space is given, that is not isomorphic to a subspace of
$l_1$ with no infinite equilateral sets.
\end{abstract}

%%% ----------------------------------------------------------------------
%                              MAIN DOCUMENT
%%% ----------------------------------------------------------------------

\section*{Introduction}

\hspace*{0.5cm} A subset $S$ of a Banach space $X$ is said to be equilateral if there exists a constant $\lambda>0$ such that $\|x-y\|=\lambda$, for $x, y\in S$ with $x\neq y$.

The question whether an infinite dimensional Banach space contains an infinite equilateral set has been answered in the negative by Terenzi in \cite{4}, who constructed an equivalent norm $|\|\cdot|\|$ on $l_1$ such that the Banach space $(l_1, |\|\cdot|\|)$ contains no infinite equilateral sets. We note that quite recently have also been proved some positive results, every renorming of $c_0$ \cite{3} and every uniformly smooth Banach space \cite{2}, admits an infinite equilateral set.

In this note, we produce some further examples of (infinite dimensional) Banach spaces not admitting infinite equilateral sets. To this end, we follow the method of Terenzi’s proof in \cite{4}. (For the sake of completeness, we give a more elaborate description of the involving parts of his arguments.) Actually, we consider a sequence of (real) finite dimensional Banach spaces $(X_n)_{n\geq 1}$ each having an 1-unconditional basis, and we prove, in almost the same way, as Terenzi in his original proof (in this case, $X_n=\Real$, for $n\geq1$) that the space $Z=(\sum_{n=1}^{\infty}\oplus X_n)_1$ has an equivalent (dual) norm $|\|\cdot|\|$ such that the space $(Z, |\|\cdot|\|)$ does not admit any infinite equilateral sets (Theorem \ref{thm2.2}). As an easy consequence of Theorem \ref{thm2.2} and standard results \cite{1} and \cite{6}, we get an example of a separable Banach space not isomorphic to a subspace of $l_1$, having no infinite equilateral sets. The note is divided in two sections. The first section is devoted to the construction and the basic results concerning the norm $|\|\cdot|\|$ and the second section is devoted to the proof of our main result.
\\
\underline{Acknowledgments.} We wish to thank G. Vassiliadis for several valuable discussions during the preparation of the paper.
%%% ----------------------------------------------------------------------
%                              SECTION ----  1  ------
%%% ----------------------------------------------------------------------

\section{}
Let $(X_n, \norm\cdot_n)_{n\geq1}$ be a sequence of Banach spaces. By $c_{00}((X_n))$ we denote the vector space $\bigcup_{n=1}^{\infty}\set{(x_1, \ldots, x_k, 0, \ldots): x_k\in X_k, \textrm{for} \,\,k=1, 2, \ldots, n}$ and we shall write $(x_1, \ldots, x_n)$ instead of $(x_1, \ldots, x_n, 0, \ldots)$. We also denote by $Z$ the Banach space $(\sum_{n=1}^{\infty}\oplus X_n)_1$.

We define, by induction, a norm on the vector space $c_{00}((X_n))$ as follows: let $n\in\mathbb{N}$ and $x_k\in X_k$, for $k=1, \ldots, n$ then,
\[
\norm{(x_1, \ldots, x_n)}=\norm{x_1}_1, \,\,\,\textrm{if}\,\,\, n=1
\]
and
\begin{equation*}
\begin{split}
\norm{(x_1, \ldots, x_n)}=\left(1-\frac{1}{n+1}\right)(\norm{x_n}_n+\norm{(x_1, \ldots, x_{n-1})})+ \\
&\!\!\!\!\!\!\!\!\!\!\!\!\!\!\!\!\!\!\!\!\!\!\!\!\!\!\!\!\!\!\!\!\!\!\!\!\!\!\!\!\!\!\!\!
\!\!\!\!\!\!\!\!\!\!\!\!\!\!\!\!
+\frac{1}{n+1}\max\set{\frac{\norm{x_n}_{n}}{n}, \norm{(x_1, \ldots, x_{n-1})}},
\end{split}
\end{equation*}
for $n\geq2$. We now let $(X, \norm\cdot)$ be the completion of the space $(c_{00}((X_n)), \norm\cdot)$.

\begin{prop}\label{prop1.1}
Let $m\in \mathbb{N}$ and $x_n\in X_n$, for $n=1, \ldots, m$. Then
\[
\sum_{n=1}^{m}\frac{1}{2}\norm{x_n}_n\leq\sum_{n=1}^{m}\left(1-\frac{1}{n+1}\right)\norm{x_n}_n
\leq\norm{(x_1, \ldots, x_m)}\leq \sum_{n=1}^{m}\norm{x_n}_n.
\]
\end{prop}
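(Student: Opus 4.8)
The plan is to prove the two nontrivial inequalities by induction on $m$, after disposing of the leftmost inequality, which is immediate: for every $n\geq 1$ one has $1-\frac{1}{n+1}\geq 1-\frac12=\frac12$, so $\sum_{n=1}^{m}\frac12\|x_n\|_n\leq\sum_{n=1}^{m}\bigl(1-\frac{1}{n+1}\bigr)\|x_n\|_n$ term by term.

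The key observation, from which both remaining bounds follow by a straightforward induction, is the one-step estimate
\[
\Bigl(1-\tfrac{1}{m+1}\Bigr)\|x_m\|_m+\|(x_1,\dots,x_{m-1})\|\ \le\ \|(x_1,\dots,x_m)\|\ \le\ \|x_m\|_m+\|(x_1,\dots,x_{m-1})\|,
\]
valid for $m\geq 2$. For the upper bound, note that in the defining formula the quantity $\max\{\|x_m\|_m/m,\ \|(x_1,\dots,x_{m-1})\|\}$ is at most $\|x_m\|_m+\|(x_1,\dots,x_{m-1})\|$, since each of its two entries is; hence the convex combination (with weights $1-\frac{1}{m+1}$ and $\frac{1}{m+1}$) of $\|x_m\|_m+\|(x_1,\dots,x_{m-1})\|$ with that maximum is itself at most $\|x_m\|_m+\|(x_1,\dots,x_{m-1})\|$. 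For the lower bound, bound the same maximum from below by $\|(x_1,\dots,x_{m-1})\|$; the defining formula then gives at least $\bigl(1-\frac{1}{m+1}\bigr)\bigl(\|x_m\|_m+\|(x_1,\dots,x_{m-1})\|\bigr)+\frac{1}{m+1}\|(x_1,\dots,x_{m-1})\|$, which simplifies exactly to $\bigl(1-\frac{1}{m+1}\bigr)\|x_m\|_m+\|(x_1,\dots,x_{m-1})\|$.

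With the one-step estimate in hand, the right-hand inequality $\|(x_1,\dots,x_m)\|\leq\sum_{n=1}^{m}\|x_n\|_n$ follows by induction on $m$, the base case $m=1$ being the definition $\|(x_1)\|=\|x_1\|_1$; iterating the left-hand half of the one-step estimate likewise yields $\|(x_1,\dots,x_m)\|\geq\sum_{n=1}^{m}\bigl(1-\frac{1}{n+1}\bigr)\|x_n\|_n$. I do not anticipate a genuine obstacle; the only point requiring a moment's care is in the lower estimate, where one must not simply discard the $\frac{1}{m+1}\max\{\cdots\}$ term: doing so would leave only the coefficient $1-\frac{1}{m+1}$ in front of $\|(x_1,\dots,x_{m-1})\|$ and the induction would degrade, whereas using $\max\{\cdots\}\geq\|(x_1,\dots,x_{m-1})\|$ restores the full coefficient $1$ there and lets the induction close cleanly.
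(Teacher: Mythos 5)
Your proof is correct and follows essentially the same route as the paper: the leftmost inequality is immediate, and the other two are obtained by induction from the same one-step estimates, bounding the $\max$ term from below by $\norm{(x_1,\dots,x_{m-1})}$ for the lower bound and from above for the upper bound. The only cosmetic difference is that you package the upper one-step estimate as a convex-combination observation, whereas the paper bounds the $\max$ by the sum of its two entries and computes the resulting coefficient $\frac{(m+1)^2+1}{(m+1)(m+2)}\leq 1$ explicitly; both are valid.
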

\begin{proof}
It is direct that for any $m\in\mathbb{N}$, $\sum_{n=1}^{m}\frac{1}{2}\norm{x_n}_n\leq\sum_{n=1}^{m}(1-\frac{1}{n+1})\norm{x_n}_n$. It remains to prove that $\sum_{n=1}^{m}(1-\frac{1}{n+1})\norm{x_n}_n\leq\norm{(x_1, \ldots, x_m)}\leq\sum_{n=1}^{m}\norm{x_n}_n$, for any $m\in\mathbb{N}$. Both inequalities are proved inductively. For $m=1$ the first inequality apparently holds. We assume that the inequality holds for some $m\geq1$. Then
\begin{eqnarray*}
\norm{(x_1, \ldots, x_m, x_{m+1})} &  \geq & \left(1-\frac{1}{m+2}\right)\left(\norm{x_{m+1}}_{m+1}+\norm{(x_1, \ldots, x_m)}\right)\\
& & \quad\quad\quad\qquad\qquad\qquad+\frac{1}{m+2}\norm{(x_1, \ldots, x_m)}\\
& = & \left(1-\frac{1}{m+2}\right)\norm{x_{m+1}}_{m+1}+\norm{(x_1, \ldots, x_m)}.
\end{eqnarray*}
By the inductive step we get that,
\[
\norm{(x_1, \ldots, x_m, x_{m+1})}\geq \left(1-\frac{1}{m+2}\right)\norm{x_{m+1}}_{m+1}+
\sum_{n=1}^{m}\left(1-\frac{1}{n+1}\right)\norm{x_n}_n,
\]
as desired.

The second inequality also holds for $m=1$. We now assume that the inequality holds for some $m\geq1$. Then,
\begin{eqnarray*}
\norm{(x_1, \ldots, x_m, x_{m+1})} & \leq & \left(1-\frac{1}{m+2}\right)
\left(\norm{x_{m+1}}_{m+1}+\norm{(x_1, \ldots, x_m)}\right)+ \\
& & \,\,\,\,\,+ \frac{1}{m+2}\frac{\norm{x_{m+1}}_{m+1}}{m+1}+\frac{1}{m+2}\norm{(x_1, \ldots, x_m)}\\
& = & \left(1-\frac{1}{m+2}\right)\norm{x_{m+1}}_{m+1}+\norm{(x_1, \ldots, x_m)}+\\
& & \qquad + \frac{1}{m+2}\frac{\norm{x_{m+1}}_{m+1}}{m+1}\\
& = & \frac{(m+1)^2+1}{(m+1)(m+2)}\norm{x_{m+1}}_{m+1}+\norm{(x_1, \ldots, x_m)} \\
& \leq & \norm{x_{m+1}}_{m+1}+\norm{(x_1, \ldots, x_m)}.
\end{eqnarray*}
As before we get the conclusion by the inductive step.
\end{proof}

\begin{remark}\label{rem1.2}
Should we have that $\sum_{n=1}^{m}\norm{x_n}_n>0$, it can be proved that
\[
\norm{(x_1, \ldots, x_m)}>\sum_{n=1}^{m}\left(1-\frac{1}{n+1}\right)\norm{x_n}_n.
\]
Indeed, let $k$ be the first non zero coordinate of the vector $(x_1, \ldots, x_m)$, then
\begin{eqnarray*}
\norm{(x_1, \ldots, x_m)} & = & \left(1-\frac{1}{k+1}\right)\norm{x_k}_k+\frac{1}{k+1}\frac{\norm{x_k}_k}{k}\\
& > & \left(1-\frac{1}{k+1}\right)\norm{x_k}_k.
\end{eqnarray*}
Now we proceed inductively to finish the proof. \,\,\,\,\cvd
\end{remark}
By Proposition \ref{prop1.1} it can be proved that the spaces $Z$ and $X$ are 2-isomorphic, which is the content of the following proposition.

\begin{prop}\label{prop1.3}
The spaces $Z$ and $X$ are 2-isomorphic.
\end{prop}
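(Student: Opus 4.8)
The plan is to observe that $c_{00}((X_n))$ sits as a common dense subspace of both $Z$ and $X$, on which the two norms are equivalent with constant $2$, and then to extend the formal identity to the completions. First I would record that, by the very definition of the $\ell_1$-sum, for a vector $v=(x_1,\ldots,x_m)\in c_{00}((X_n))$ one has $\norm{v}_Z=\sum_{n=1}^{m}\norm{x_n}_n$, whereas by Proposition \ref{prop1.1},
\[
\tfrac12\,\norm{v}_Z\le\norm{v}\le\norm{v}_Z,\qquad v\in c_{00}((X_n)).
\]
Consequently the identity map $I\colon(c_{00}((X_n)),\norm\cdot_Z)\to(c_{00}((X_n)),\norm\cdot)$ is a linear bijection which is bounded, with $\norm{I}\le1$, and bounded below, with $\norm{Iv}\ge\tfrac12\norm{v}_Z$, i.e.\ $\norm{I^{-1}}\le2$.

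Next I would pass to the completions. Since $c_{00}((X_n))$ is dense in $Z=(\sum_{n=1}^{\infty}\oplus X_n)_1$ and, by construction, dense in $X$, the operator $I$ extends uniquely to a bounded linear operator $\widetilde I\colon Z\to X$ with $\norm{\widetilde I}\le1$, and $\widetilde I$ restricts to $I$ on $c_{00}((X_n))$. Letting $v$ run through a sequence in $c_{00}((X_n))$ converging to an arbitrary $z\in Z$ and passing to the limit in $\norm{Iv}\ge\tfrac12\norm{v}_Z$ gives $\norm{\widetilde I z}\ge\tfrac12\norm{z}_Z$ for all $z\in Z$; in particular $\widetilde I$ is injective and has closed range in $X$. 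As this range contains $c_{00}((X_n))$, which is dense in $X$, the map $\widetilde I$ is onto. Hence $\widetilde I$ is an isomorphism of $Z$ onto $X$ with $\norm{\widetilde I}\cdot\norm{\widetilde I^{-1}}\le1\cdot2=2$, which is precisely the assertion that $Z$ and $X$ are $2$-isomorphic.

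The argument is essentially routine: the estimate of Proposition \ref{prop1.1} is exactly what drives everything. The only step needing a little care is the surjectivity of $\widetilde I$, for which one invokes the standard fact that a bounded-below operator between Banach spaces has closed range, together with the density of $c_{00}((X_n))$ in $X$; no further ingredients are required.
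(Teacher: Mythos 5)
Your proof is correct and follows exactly the same route as the paper, which simply notes that Proposition \ref{prop1.1} gives the two-sided estimate on the common dense subspace $c_{00}((X_n))$ and extends to the completions; you have merely written out the (routine) extension argument in full detail.
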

\begin{proof}
It follows immediately from Proposition \ref{prop1.1}, since $c_{00}((X_n))$ is a dense subspace in both $Z$ and $X$. (Actually, we defined an equivalent norm on $Z$.)
\end{proof}

\begin{prop}\label{prop1.4}
The sequence $(X_n)_{n\geq1}$ is a monotone Schauder decomposition of each of the spaces $X$ and $Z$, furthermore, it is 1-unconditional decomposition $X$ and $Z$.
\end{prop}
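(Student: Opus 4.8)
The plan is to deal with $Z$ first, where the assertion is immediate, and then to transfer the conclusions to $X$ using only the recursive definition of $\norm{\cdot}$. For $Z=\big(\sum_{n=1}^{\infty}\oplus X_n\big)_1$ the verification is routine: every $x=\sum_n x_n\in Z$ has $\sum_n\norm{x_n}_n<\infty$, and for a finite set $A\subseteq\mathbb{N}$ the natural projection $Q_A\big(\sum_n x_n\big)=\sum_{n\in A}x_n$ satisfies $\norm{Q_Ax}_Z=\sum_{n\in A}\norm{x_n}_n\leq\norm{x}_Z$, so $\norm{Q_A}\leq 1$; the initial-segment sums converge to $x$ and the representation $x=\sum_n x_n$ is unique. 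Hence $(X_n)_{n\geq1}$ is a monotone (take $A=\{1,\dots,m\}$) and $1$-unconditional (let $A$ range over all finite subsets) Schauder decomposition of $Z$.

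For $X$ the key point is a coordinatewise monotonicity property of $\norm{\cdot}$, which I would prove by induction on $m$: \emph{if $\norm{x_k}_k\leq\norm{y_k}_k$ for $k=1,\dots,m$, then $\norm{(x_1,\dots,x_m)}\leq\norm{(y_1,\dots,y_m)}$.} The case $m=1$ is the identity $\norm{(x_1)}=\norm{x_1}_1$, and in the inductive step one only observes that both terms on the right-hand side of the defining formula for $\norm{(x_1,\dots,x_m)}$ are non-decreasing in $\norm{x_m}_m$ and in $\norm{(x_1,\dots,x_{m-1})}$, the latter being controlled by the inductive hypothesis. From this I would read off two facts: (i) inserting zero coordinates does not change the norm (setting $\norm{x_m}_m=0$ collapses the formula to $\norm{(x_1,\dots,x_{m-1})}$), and (ii) for scalars $\eps_1,\dots,\eps_m$ with $\abs{\eps_k}\leq 1$ one has $\norm{(\eps_1 x_1,\dots,\eps_m x_m)}\leq\norm{(x_1,\dots,x_m)}$, since $\norm{\eps_k x_k}_k\leq\norm{x_k}_k$.

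Combining (i) and (ii): for a finite set $A\subseteq\{1,\dots,m\}$ and $x=(x_1,\dots,x_m)\in c_{00}((X_n))$, choosing $\eps_n=1$ for $n\in A$ and $\eps_n=0$ otherwise and then discarding the zero coordinates gives $\norm{\sum_{n\in A}x_n}\leq\norm{\sum_{n=1}^m x_n}$. Therefore the finitely supported projections $P_A$ on $c_{00}((X_n))$ have norm $\leq 1$; since $c_{00}((X_n))$ is dense in $X$ (by construction, $X$ being its completion) they extend to norm-$\leq 1$ projections on $X$ with $P_AP_B=P_{A\cap B}$. Taking $A=\{1,\dots,m\}$ yields the initial-segment projections $P_m$ with $\norm{P_m}\leq 1$ and $P_mP_k=P_{\min(m,k)}$; as $P_m x\to x$ for $x\in c_{00}((X_n))$ and $\sup_m\norm{P_m}\leq 1$, a standard approximation argument gives $P_m x\to x$ for every $x\in X$. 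Hence $(X_n)_{n\geq1}$ is a Schauder decomposition of $X$ (uniqueness of $x=\sum_n x_n$ follows by applying $P_n-P_{n-1}$), it is monotone since $\norm{P_m}\leq 1$, and it is $1$-unconditional since $\norm{P_A}\leq 1$ for every finite $A\subseteq\mathbb{N}$.

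The only non-formal ingredient is the coordinatewise monotonicity of $\norm{\cdot}$; granting it, everything else is the standard passage from uniformly bounded initial-segment and finitely supported projections on a dense subspace to a monotone, $1$-unconditional Schauder decomposition, so I anticipate no real obstacle. (The internal $1$-unconditional basis of each $X_n$ is not used here; it enters only in the later results.)
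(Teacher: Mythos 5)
Your proof is correct and follows essentially the same route as the paper's: both rest on the observation, immediate from the recursive definition, that $\norm{(x_1,\dots,x_m)}$ is a monotone function of the coordinate norms $\norm{x_k}_k$ alone, which yields norm-one initial-segment and sign-change (equivalently, finite-subset) projections on the dense subspace $c_{00}((X_n))$. The paper records this more tersely (sign invariance together with $\norm{(x_1,\dots,x_m)}\leq\norm{(x_1,\dots,x_{m+1})}$, and the analogous identities for $Z$) and omits the routine extension-by-density details that you spell out.
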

\begin{proof}
It is clear that for every $n\in\mathbb{N}$ the space $X_n$ is a closed subspace of both $X$ and $Z$. Now it is direct from the definition of the norms of $X$ and $Z$ that for every $x=(x_1, \ldots, x_n, \ldots)\in c_{00}((X_n))$, every $(\varepsilon_i)_{i\geq1}\in\set{-1, 1}^{\mathbb{N}}$ and each $m\in\mathbb{N}$ we have
\[
\norm{(\varepsilon_1x_1, \ldots, \varepsilon_mx_m)}= \norm{(x_1, \ldots, x_m)}\leq\norm{(x_1, \ldots, x_{m+1})}\,\,\, \textrm{and}
\]
\[
\norm{(\varepsilon_1x_1, \ldots, \varepsilon_mx_m)}_1= \norm{(x_1, \ldots, x_m)}_1\leq\norm{(x_1, \ldots, x_{m+1})}_1.
\]
So we are done.
\end{proof}
It should be mentioned here that by Proposition \ref{prop1.1} the Schauder decomposition $(X_n)_{n\geq1}$ of the space $X$ is equivalent to itself, considered as a Schauder decomposition of $Z$.
\begin{prop}\label{prop1.5}
i) Let $m\geq1$ and $x_n\in X_n$, for $n=1, \ldots, m$ with $\sum_{n=1}^{m}\norm{x_n}_n>0$. Then there exist $1\geq d_1, \ldots, d_m>0$ such that
\[
\norm{(x_1, \ldots, x_m)}= \sum_{n=1}^{m}d_n\norm{x_n}_n.
\]
ii) Let $x=(x_1, \ldots, x_k, \ldots)\in X$. We assume that there exist $c>0$ and $k\in\mathbb{N}$ such that $\frac{\norm{x_n}_n}{n}<c\leq\norm{(x_1, \ldots, x_k)}$, for every $n>k$. Then
\[
\norm x=\norm{(x_1, \ldots, x_k)}+\sum_{n=k+1}^{\infty}\left(1-\frac{1}{n+1}\right)\norm{x_n}_n.
\]
iii) Let $x=(x_1, \ldots, x_k, \ldots)$ be a non zero vector of $X$, then there exists $k\in\mathbb{N}$ such that
\[
\norm x=\norm{(x_1, \ldots, x_k)}+\sum_{n=k+1}^{\infty}\left(1-\frac{1}{n+1}\right)\norm{x_n}_n.
\]
\end{prop}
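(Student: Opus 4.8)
The plan is to establish the three parts in order, each by an induction resting on the recursive definition of $\norm{\cdot}$, on the two-sided estimate of Proposition~\ref{prop1.1}, and on the fact (Proposition~\ref{prop1.4}) that $(X_n)$ is a monotone Schauder decomposition of $X$.

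For (i) I would induct on $m$. The case $m=1$ gives $d_1=1$. For the inductive step, expand $\norm{(x_1, \ldots, x_m)}$ by its defining recursion and split according to which term attains $\max\set{\frac{\norm{x_m}_m}{m},\ \norm{(x_1, \ldots, x_{m-1})}}$. If the maximum is $\norm{(x_1, \ldots, x_{m-1})}$, the recursion collapses to $\norm{(x_1, \ldots, x_m)}=\left(1-\frac{1}{m+1}\right)\norm{x_m}_m+\norm{(x_1, \ldots, x_{m-1})}$, so one puts $d_m=1-\frac{1}{m+1}\in(0,1]$ and keeps the coefficients furnished by the inductive hypothesis applied to $(x_1, \ldots, x_{m-1})$, which has positive norm in this case. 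If instead the maximum is $\frac{\norm{x_m}_m}{m}$, then the coefficient of $\norm{x_m}_m$ equals $\left(1-\frac{1}{m+1}\right)+\frac{1}{m(m+1)}=\frac{m^2+1}{m(m+1)}$, which lies in $(0,1]$ precisely because $m\ge1$, while the earlier coefficients are those of the inductive hypothesis scaled by $1-\frac{1}{m+1}$; in the degenerate subcase $\norm{(x_1, \ldots, x_{m-1})}=0$, Proposition~\ref{prop1.1} forces $\norm{x_n}_n=0$ for all $n<m$, and the corresponding $d_n$ may be assigned any values in $(0,1]$.

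For (ii) I would prove, by induction on $m\ge k$ (the case $m=k$ being the empty identity), that $\norm{(x_1, \ldots, x_m)}=\norm{(x_1, \ldots, x_k)}+\sum_{n=k+1}^{m}\left(1-\frac{1}{n+1}\right)\norm{x_n}_n$. The induction is self-sustaining: since all summands added so far are nonnegative, $\norm{(x_1, \ldots, x_m)}\ge\norm{(x_1, \ldots, x_k)}\ge c>\frac{\norm{x_{m+1}}_{m+1}}{m+1}$, so the maximum in the recursion at level $m+1$ is attained at $\norm{(x_1, \ldots, x_m)}$ and the recursion reduces to $\norm{(x_1, \ldots, x_{m+1})}=\left(1-\frac{1}{m+2}\right)\norm{x_{m+1}}_{m+1}+\norm{(x_1, \ldots, x_m)}$. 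Letting $m\to\infty$ and using that $(X_n)$ is a Schauder decomposition of $X$, whence $\norm{(x_1, \ldots, x_m)}\to\norm x$, yields the asserted identity; the series on the right converges since Proposition~\ref{prop1.1} bounds its partial sums by $\norm x$.

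For (iii) the task is to produce a $k$ and a $c$ fitting the hypotheses of (ii). As $x\ne0$ and the decomposition is monotone, $\norm{(x_1, \ldots, x_k)}$ increases to $\norm x>0$, so there is $k_0$ with $a:=\norm{(x_1, \ldots, x_{k_0})}>0$ and $\norm{(x_1, \ldots, x_k)}\ge a$ for every $k\ge k_0$; on the other hand Proposition~\ref{prop1.1} gives $\norm{x_n}_n\le2\norm{(x_1, \ldots, x_n)}\le2\norm x$, so $\frac{\norm{x_n}_n}{n}\to0$. Choosing $c=a/2$ and then $k\ge k_0$ large enough that $\frac{\norm{x_n}_n}{n}<c$ for all $n>k$, part (ii) applies and delivers (iii). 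I expect the main obstacle to be purely bookkeeping: keeping the $d_n$ strictly positive in (i) when earlier partial norms vanish, and verifying in (ii) that the inequality $\norm{(x_1, \ldots, x_m)}\ge c$ indeed propagates so that the maximum is resolved identically at every step.
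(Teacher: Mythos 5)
Your proposal is correct and follows essentially the same route as the paper: induction on the recursive definition with the same two-case split for (i), the same telescoping argument for (ii), and the same choice of $k$ via $\frac{\norm{x_n}_n}{n}\to 0$ versus $\norm{(x_1,\ldots,x_k)}\to\norm{x}>0$ for (iii). Your explicit treatment of the degenerate subcase $\norm{(x_1,\ldots,x_{m-1})}=0$ in (i) is a minor point the paper's induction passes over silently, but it does not change the argument.
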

\begin{proof}
i) For $m=1$, we have nothing to prove. We assume now that the conclusion holds for some $m\geq1$. Let now $x_n\in X_n$, for $n=1, \ldots, m+1$. By the inductive step there exist $d '_1, \ldots, d '_m>0$ such that
\[
\norm{(x_1, \ldots, x_m)}=\sum_{n=1}^{m}d'_n\norm{x_n}_n.
\]
Now we have that
\[
\frac{\norm{x_{m+1}}_{m+1}}{m+1}\geq\norm{(x_1, \ldots, x_m)} \,\,\,
\textrm{or}
\,\,\,
\frac{\norm{x_{m+1}}_{m+1}}{m+1}\leq\norm{(x_1, \ldots, x_m)}.
\]
By the definition of the norm in the first case we take that
\begin{eqnarray*}
\norm{(x_1, \ldots, x_{m+1})} & = & \left(1-\frac{1}{m+2}\right)\norm{(x_1, \ldots, x_m)}+ \\
& & \qquad\qquad+ \left(1-\frac{1}{m+2}+\frac{1}{(m+1)(m+2)}\right)\norm{x_{m+1}}_{m+1}\\
& = & \sum_{n=1}^{m}\left(1-\frac{1}{m+2}\right)d'_n\norm{x_n}_n+ \\
& & \qquad\qquad+\left(1-\frac{1}{m+2}+\frac{1}{(m+1)(m+2)}\right)\norm{x_{m+1}}_{m+1}.
\end{eqnarray*}
In the second case, the definition of the norm gives that
\begin{eqnarray*}
\norm{(x_1, \ldots, x_{m+1})} & = & \norm{(x_1, \ldots, x_m)}+\left(1-\frac{1}{m+2}\right)\norm{x_{m+1}}_{m+1}\\
& = & \sum_{n=1}^{m}d'_n\norm{x_n}_n + \left(1-\frac{1}{m+2}\right)\norm{x_{m+1}}_{m+1}.
\end{eqnarray*}
ii) Firstly, we observe that by the definition of the norm
\begin{equation}\label{eq1}
\norm{(x_1, \ldots, x_{k+1})}=\norm{(x_1, \ldots, x_k)}+\left(1-\frac{1}{k+2}\right)\norm{x_{k+1}}_{k+1}.
\end{equation}
Further, for any $s>k$ we have that
\[
\norm{(x_1, \ldots, x_s)}\geq\norm{(x_1, \ldots, x_k)}, \,\,\,\textrm{by} \,\,\,\textrm{Proposition}\,\, \ref{prop1.4}
\]
\[
\geq c >\frac{\norm{x_{s+1}}_{s+1}}{s+1}.
\]
Again, the definition of the norm gives that
\begin{equation}\label{eq2}
\norm{(x_1, \ldots, x_{s+1})}=\norm{(x_1, \ldots, x_s)}+\left(1-\frac{1}{s+2}\right)\norm{x_{s+1}}_{s+1}.
\end{equation}
Combining \eqref{eq1} and \eqref{eq2}, we take that for any $s>k$
\[
\norm{(x_1, \ldots, x_{s})}=\norm{(x_1, \ldots, x_k)}+\sum_{n=k+1}^{s}\left(1-\frac{1}{n+1}\right)\norm{x_n}_n.
\]
Now we let $s \to \infty$ to take the conclusion.

iii) Let $x\in X$ with $x\neq0$. Since $x$ also belongs to $Z$, by the definition of the norm of $Z$, there exists $M>0$ such that $\norm{x_n}_n\leq M$, for every $n\in \mathbb{N}$. Now we have that $\frac{M}{n}\to 0$ and  (by Proposition \ref{prop1.4}) $\norm{(x_1, \ldots, x_n)}\to\norm x>0$. So there exists $k\in\mathbb{N}$ such that
\[
\norm{(x_1, \ldots, x_k)}>\frac{M}{k}>\frac{\norm{x_n}_n}{n}, \,\,\,\textrm{for\,\, every}\,\, n>k.
\]
Then by Proposition \ref{prop1.4}, we have that
\[
\norm{(x_1, \ldots, x_m)}\geq\norm{(x_1, \ldots, x_k)}>\frac{\norm{x_n}_n}{n}, \,\,\,\textrm{for\,\,every}\,\, k\leq m <n.
\]
Now ii) gives the conclusion.
\end{proof}
\begin{cor}\label{cor1.6}
Let $x=(x_1, \ldots, x_n, \ldots)$ be a non zero vector of $X$, then there exist $k\geq2$ and $(d_n)_{n\geq1}\subseteq(0,1]$ such that $d_n=\left(1-\frac{1}{n+1}\right)$, for $n\geq k$ and \[
\norm x=\sum_{n=1}^{\infty}d_n\norm{x_n}_n.
\]
\end{cor}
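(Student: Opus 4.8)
The plan is to obtain the statement by substituting the decomposition of Proposition \ref{prop1.5}(iii) into Proposition \ref{prop1.5}(i). Applying Proposition \ref{prop1.5}(iii) to the non-zero vector $x$ gives an $m\in\mathbb{N}$ with
\[
\norm x=\norm{(x_1,\ldots,x_m)}+\sum_{n=m+1}^{\infty}\left(1-\frac{1}{n+1}\right)\norm{x_n}_n .
\]
To be allowed to invoke Proposition \ref{prop1.5}(i) on the block $(x_1,\ldots,x_m)$ I need $\sum_{n=1}^{m}\norm{x_n}_n>0$, so the first thing I would record is that $m$ here may be taken as large as we like. This is because the hypothesis of Proposition \ref{prop1.5}(ii) is monotone in the index: if $\norm{x_n}_n/n<c\le\norm{(x_1,\ldots,x_m)}$ holds for all $n>m$, then the same is true with $m$ replaced by any $m'\ge m$, since $\norm{(x_1,\ldots,x_{m'})}\ge\norm{(x_1,\ldots,x_m)}$ by Proposition \ref{prop1.4} while the inequality $\norm{x_n}_n/n<c$ persists for $n>m'$. (This is exactly the pair $(c,m)$ produced inside the proof of Proposition \ref{prop1.5}(iii).) Since $x\ne0$ has a non-zero coordinate $x_{n_0}$, I would therefore choose $m\ge n_0$, so that $m\ge1$ and $\sum_{n=1}^{m}\norm{x_n}_n\ge\norm{x_{n_0}}_{n_0}>0$, with the displayed identity still in force.

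Now Proposition \ref{prop1.5}(i) applies to $(x_1,\ldots,x_m)$ and yields $d_1,\ldots,d_m\in(0,1]$ with $\norm{(x_1,\ldots,x_m)}=\sum_{n=1}^{m}d_n\norm{x_n}_n$. I then set $d_n:=1-\frac{1}{n+1}$ for every $n>m$; these values lie in $(0,1)\subseteq(0,1]$, so $(d_n)_{n\ge1}\subseteq(0,1]$. Plugging both pieces into the identity above,
\[
\norm x=\sum_{n=1}^{m}d_n\norm{x_n}_n+\sum_{n=m+1}^{\infty}d_n\norm{x_n}_n=\sum_{n=1}^{\infty}d_n\norm{x_n}_n ,
\]
and by construction $d_n=1-\frac{1}{n+1}$ for all $n\ge m+1$. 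Setting $k:=m+1$ (so $k\ge2$, as $m\ge1$) finishes the argument.

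The only point needing care — and the step I would double-check — is the reduction ``we may assume $\sum_{n=1}^{m}\norm{x_n}_n>0$'', since Proposition \ref{prop1.5}(i) has no content otherwise; the monotonicity observation above is what makes it legitimate to enlarge the splitting index coming out of Proposition \ref{prop1.5}(iii). The remainder is a routine substitution.
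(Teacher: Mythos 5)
Your proposal is correct and follows essentially the same route as the paper, which simply declares the corollary immediate from parts i) and ii) of Proposition \ref{prop1.5}. The extra care you take in enlarging the splitting index so that $\sum_{n=1}^{m}\norm{x_n}_n>0$ (making part i) applicable) is a legitimate and worthwhile detail that the paper leaves implicit.
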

\begin{proof}
The proof is immediate from i) and ii) of Proposition \ref{prop1.5}.
\end{proof}
\begin{defn}\label{def1.7}
Let $x, y\in X$. We will say that the norms of the vectors $x$ and $y$ have the same representation if there exist $(d_n)_{n\geq1}$, $(s_n)_{n\geq1}\subseteq(0,1]$ with $\norm x=\sum_{n=1}^{\infty}d_n\norm{x_n}_n$ and $\norm y=\sum_{n=1}^{\infty}s_n\norm{y_n}_n$ such that $d_n=s_n$, for every $n\in\mathbb{N}$. We will write $x\sim y$, when the norms of the vectors $x$ and $y$ have the same representation.
\end{defn}
We now consider the following example. Let $k>1$, we choose $x_n \in X_n$, for every $n\leq k-1$, such that $\norm{(x_1, \ldots, x_{k-1})}=\frac{1}{k}$ and $x_k \in X_k$ with $\norm{x_k}_k=1$. Firstly, we observe that $\max\set{\frac{\norm{x_k}_k}{k}, \norm{(x_1, \ldots, x_{k-1})}}$ can be either of the terms $\frac{\norm{x_k}_k}{k}$ and $\norm{(x_1, \ldots, x_{k-1})}$, so for the first $n$ coordinates of $x$ the coefficients $d_n$ can be chosen in either of the ways described in i) of Proposition \ref{prop1.5}, hence the representation of the norm of the vector $x=(x_1, \ldots, x_k)$ is not unique. Further, we consider the vectors $y=(x_1, \ldots, x_{k-1}, y_k)$ and $z=(x_1, \ldots, x_{k-1}, z_k)$, where $y_k, z_k \in X_k$ such that $\frac{\norm{z_k}_k}{k}<\norm{(x_1, \ldots, x_{k-1})}<\frac{\norm{y_k}_k}{k}$. Then we have that $x\sim y$, $x\sim z$, but is not true that $y\sim z$.

\begin{remark}\label{rem1.8}
Let $x=(x_1, \ldots, x_n, \ldots)$, $y=(y_1, \ldots, y_n, \ldots)\in X$ and $k\geq2$. Let also
\[
\norm{(x_1, \ldots, x_n)}\geq \frac{\norm{x_{n+1}}_{n+1}}{n+1}\,\,\Rightarrow\,\, \norm{(y_1, \ldots, y_n)}\geq\frac{\norm{y_{n+1}}_{n+1}}{n+1}
\]
and
\[
\norm{(x_1, \ldots, x_n)}\leq \frac{\norm{x_{n+1}}_{n+1}}{n+1}\,\,\Rightarrow\,\, \norm{(y_1, \ldots, y_n)}\leq\frac{\norm{y_{n+1}}_{n+1}}{n+1}
\]
for every $1\leq n\leq k$.
\\
Then, whenever one of the following holds, $x\sim y$.
\begin{itemize}
\item[i)] $x_n=0=y_n$, for every $n>k$.
\item[ii)] $\norm x, \norm y\leq M$ and $\norm{(x_1, \ldots, x_k)}$, $\norm{(y_1, \ldots, y_k)}>\frac{2M}{k+1}$.
\end{itemize}
Indeed, i) is apparent by Definition \ref{def1.7} and ii) derives from Proposition \ref{prop1.1} and ii) of Proposition \ref{prop1.5}.
\end{remark}
\begin{defn}\label{def1.9}
Let $x_m=(x_{m1}, \ldots, x_{mn}, \ldots)$, $m\geq1$ be a sequence in $X$. Let also $x=(x_1, \ldots, x_n, \ldots)\in X$. The sequence $(x_m)$ is said to be pointwise convergent to $x$, if $x_{mn}\overset{\norm\cdot_n}\to x_n$, $m\to\infty$, for every $n\in\mathbb{N}$. To state the pointwise convergence of $(x_m)$ to $x$, we will write $x_m\overset{p}\rightarrow x$.
\end{defn}
\begin{prop}\label{prop1.10}
Let $(x_m)_{m\geq1}$ be a bounded sequence in $X$ and $x\in X\backslash\set{0}$ with $x_m\overset{p}\to x$. There exists an infinite subset $M$ of $\mathbb{N}$ such that the norms of the vectors of the set $\set{x_m : m \in M}\cup\set{x}$ have the same representation.
\end{prop}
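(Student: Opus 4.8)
The plan is to exhibit a single sequence $(d_n)_{n\geq1}\subseteq(0,1]$ that represents, in the sense of Definition \ref{def1.7}, the norm of $x$ and the norm of $x_m$ for every $m$ in the infinite set $M$ we shall construct. The mechanism behind this is the recursion in the proof of Proposition \ref{prop1.5}(i): for a fixed $v=(v_1,v_2,\ldots)\in X$, once one knows, for each $j<s$, which of the alternatives ``$\tfrac{\norm{v_{j+1}}_{j+1}}{j+1}\geq\norm{(v_1,\ldots,v_j)}$'' or ``$\leq$'' holds, the coefficients $d_1,\ldots,d_s$ with $\norm{(v_1,\ldots,v_s)}=\sum_{n=1}^{s}d_n\norm{v_n}_n$ are \emph{completely determined} by those $s-1$ binary decisions: starting from $d_1=1$, at step $j$ one multiplies the current coefficients by $1-\tfrac1{j+2}$ and appends $1-\tfrac1{j+2}+\tfrac1{(j+1)(j+2)}$ in the first alternative, and keeps the current coefficients and appends $1-\tfrac1{j+2}$ in the second (the two rules agreeing when equality holds). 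Consequently, if two vectors realise, for the same fixed symbols $R_1,\ldots,R_{s-1}\in\set{\geq,\leq}$, the relations ``$\tfrac{\norm{v_{j+1}}_{j+1}}{j+1}\,R_j\,\norm{(v_1,\ldots,v_j)}$'' for all $j<s$, then they have the same coefficients $d_1,\ldots,d_s\in(0,1]$ on their first $s$ coordinates. This is the quantitative content of Remark \ref{rem1.8}; it cannot be quoted verbatim because the hypothesis of that remark requires an equality for $x$ to be matched by an equality for the other vector, which cannot be ensured under pointwise convergence, so the tie-breaking has to be carried out by hand.

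First I would fix $\Lambda>0$ with $\norm{x_m}\leq\Lambda$ for all $m$. Since $\norm{(x_{m1},\ldots,x_{mk})}\leq\norm{x_m}\leq\Lambda$ by Proposition \ref{prop1.4}, and $\norm{(x_{m1},\ldots,x_{mk})}\to\norm{(x_1,\ldots,x_k)}$ as $m\to\infty$ by pointwise convergence together with Proposition \ref{prop1.1} and the triangle inequality, one gets $\norm{(x_1,\ldots,x_k)}\leq\Lambda$ for every $k$, hence $\norm x\leq\Lambda$. As $\norm{(x_1,\ldots,x_k)}\to\norm x>0$ while $\tfrac{2\Lambda}{k+1}\to0$, fix $k\geq2$ with $\norm{(x_1,\ldots,x_k)}>\tfrac{2\Lambda}{k+1}$. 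Because $\norm{v_n}_n\leq2\norm v\leq2\Lambda$ for every $n$ and every $v\in\set x\cup\set{x_m:m\geq1}$ by Proposition \ref{prop1.1}, we get $\tfrac{\norm{v_n}_n}{n}\leq\tfrac{2\Lambda}{k+1}$ for all $n>k$, and in particular $\tfrac{\norm{x_{k+1}}_{k+1}}{k+1}<\norm{(x_1,\ldots,x_k)}$.

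Next I would stabilise the finitely many branch decisions on the coordinates $1,\ldots,k$. For fixed $n$ with $1\leq n\leq k$, pointwise convergence (with Proposition \ref{prop1.1} and the triangle inequality) gives $\norm{(x_{m1},\ldots,x_{mn})}\to\norm{(x_1,\ldots,x_n)}$ and $\norm{x_{m,n+1}}_{n+1}\to\norm{x_{n+1}}_{n+1}$ as $m\to\infty$. If $x$ satisfies a \emph{strict} inequality between $\tfrac{\norm{x_{n+1}}_{n+1}}{n+1}$ and $\norm{(x_1,\ldots,x_n)}$ (which holds automatically at $n=k$, by the previous step), then $x_m$ satisfies the same strict inequality for all large $m$, and I take $R_n$ to be that relation. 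If instead equality holds for $x$, then by the pigeonhole principle infinitely many $m$ satisfy $\tfrac{\norm{x_{m,n+1}}_{n+1}}{n+1}\geq\norm{(x_{m1},\ldots,x_{mn})}$ or infinitely many satisfy the reverse inequality; I pass to such an infinite subset and let $R_n$ be the corresponding relation, which---being an equality for $x$---also holds for $x$. Doing this successively for $n=1,\ldots,k$ (finitely many passages to infinite subsets and deletions of initial indices) and finally intersecting with the cofinite set $\set{m:\norm{(x_{m1},\ldots,x_{mk})}>\tfrac{2\Lambda}{k+1}}$, I obtain an infinite $M\subseteq\mathbb{N}$ and symbols $R_1,\ldots,R_k\in\set{\geq,\leq}$ such that every $v\in\set x\cup\set{x_m:m\in M}$ satisfies ``$\tfrac{\norm{v_{n+1}}_{n+1}}{n+1}\,R_n\,\norm{(v_1,\ldots,v_n)}$'' for all $1\leq n\leq k$ and $\norm{(v_1,\ldots,v_k)}>\tfrac{2\Lambda}{k+1}$.

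Finally I would assemble the representation. By the mechanism of the first paragraph (applied with $s=k$) there are common $d_1,\ldots,d_k\in(0,1]$ with $\norm{(v_1,\ldots,v_k)}=\sum_{n=1}^{k}d_n\norm{v_n}_n$ for every $v\in\set x\cup\set{x_m:m\in M}$. Moreover, for each such $v$ and any $c$ with $\tfrac{2\Lambda}{k+1}<c\leq\norm{(v_1,\ldots,v_k)}$ one has $\tfrac{\norm{v_n}_n}{n}<c\leq\norm{(v_1,\ldots,v_k)}$ for all $n>k$, so Proposition \ref{prop1.5}(ii) gives $\norm v=\norm{(v_1,\ldots,v_k)}+\sum_{n=k+1}^{\infty}\bigl(1-\tfrac1{n+1}\bigr)\norm{v_n}_n$. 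Putting $d_n=1-\tfrac1{n+1}$ for $n>k$, it follows that $\norm v=\sum_{n=1}^{\infty}d_n\norm{v_n}_n$ for every $v\in\set x\cup\set{x_m:m\in M}$ with one and the same $(d_n)_{n\geq1}\subseteq(0,1]$; that is, these vectors all have the same representation, as required. The step I expect to be the main obstacle is precisely the stabilisation at coordinates where $x$ has an equality in its comparison: there the alternative chosen by $x_m$ is not governed by the pointwise limit and must be forced by a pigeonhole argument, and one must exploit that an equality for $x$ allows $x$ itself to be placed on whichever side the selected $x_m$'s land.
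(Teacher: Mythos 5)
Your proof is correct and follows essentially the same route as the paper's: choose a cut-off coordinate beyond which Proposition \ref{prop1.5}(ii) forces the tail coefficients $1-\frac{1}{n+1}$, then stabilise the finitely many branch decisions on the initial coordinates via pointwise convergence for strict inequalities and a pigeonhole extraction for ties. Your only departure is that you run the coefficient recursion of Proposition \ref{prop1.5}(i) by hand instead of citing Remark \ref{rem1.8}, which is in fact a slightly more careful treatment of the tie case than the paper gives.
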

\begin{proof}
Since $(x_m)_{m\geq1}$ is bounded, there exists $L>0$ such that $\sup(\{\norm{x_{mn}}_n:$ $ m,n \in\mathbb{N}\}$ $\cup\set{\norm{x_n}_n: n\in\mathbb{N}})\leq L$. As in iii) of Proposition \ref{prop1.5}, there exists $k_0\in\mathbb{N}$ such that
\[
\norm{(x_1, \ldots, x_{k_0})}>\frac{L}{n}, \,\,\, \textrm{for\,\, every}\,\, n>k_0.
\]
By the pointwise convergence of $(x_{m})$ to $x$, there exists $N\in\mathbb{N}$ such that
\[
\norm{(x_{m1}, \ldots, x_{mk_{0}})}>\frac{L}{n}, \,\,\, \textrm{for\,\, every}\,\, n>k_0\,\,\textrm{and}\,\, m\geq N.
\]
Consequently, we have that $\norm{(x_1, \ldots, x_{k_0})}>\frac{\norm{x_{n}}_n}{n}$ and $\norm{(x_{m1}, \ldots, x_{mk_0})}>\frac{\norm{x_{mn}}_n}{n}$, for every $n>k_0$ and $m\geq N$. Now ii) of Proposition \ref{prop1.5} yields that
\[
\norm x=\norm{(x_1, \ldots, x_{k_0})}+\sum_{n=k_0+1}^{\infty}\left(1-\frac{1}{n+1}\right)\norm{x_{n}}_n
\]
and
\[
\norm{x_m}=\norm{(x_{m1}, \ldots, x_{mk_0})}+\sum_{n=k_0+1}^{\infty}\left(1-\frac{1}{n+1}\right)\norm{x_{mn}}_n,
\]
$m\geq N$ (1).

Now it suffices to show that there exists an infinite subset $M$ of $\{m\in \mathbb{N}:$ $ m\geq N\}$ such that the norm of the vectors of the set $\set{x_m: m\in M}\cup\set{x}$ have the same representation. By Remark \ref{rem1.8}, it would be sufficient to show that that there exists an infinite subset $M$ of $\set{m\in \mathbb{N}: m\geq N}$ such that for any $n=1, \ldots, k_0-1$ we have
\[
\norm{(x_1, \ldots, x_{n})}\geq\frac{\norm{x_{n+1}}_{n+1}}{n+1}\,\,\Rightarrow\,\,
\norm{(x_{m1}, \ldots, x_{mn})}\geq\frac{\norm{x_{mn+1}}_{n+1}}{n+1}
\]
and
\[
\norm{(x_1, \ldots, x_{n})}\leq\frac{\norm{x_{n+1}}_{n+1}}{n+1}\,\,\Rightarrow\,\,
\norm{(x_{m1}, \ldots, x_{mn})}\leq\frac{\norm{x_{mn+1}}_{n+1}}{n+1},
\]
for every $m\in M$ (2).

For any $n=1, \ldots, k_0-1$ we have that either a) $\norm{(x_1, \ldots, x_n)}>\frac{\norm{x_{n+1}}_{n+1}}{n+1}$ or b) $\norm{(x_1, \ldots, x_n)}<\frac{\norm{x_{n+1}}_{n+1}}{n+1}$ or c) $\norm{(x_1, \ldots, x_n)}=\frac{\norm{x_{n+1}}_{n+1}}{n+1}$. Again, by the pointwise convergence of $(x_m)$ to $x$, the number $N$ in (1)  can be assumed large enough, so whenever one of a) or b) holds, the corresponding inequality holds for the vector $x_m$, for every $m\geq N$. Let now $\norm{(x_1, \ldots, x_n)}=\frac{\norm{x_{n+1}}_{n+1}}{n+1}$ for some $n\in\set{1, \ldots, k_0-1}$, then there exists an infinite subset $B$ of $\set{m\in \mathbb{N}: m\geq N}$ such that one of the following alternatives, $\norm{(x_{m1}, \ldots, x_{mn})}>\frac{\norm{x_{mn+1}}_{n+1}}{n+1}$ or $\norm{(x_{m1}, \ldots, x_{mn})}<\frac{\norm{x_{mn+1}}_{n+1}}{n+1}$ or $\norm{(x_{m1}, \ldots, x_{mn})}=\frac{\norm{x_{mn+1}}_{n+1}}{n+1}$, holds true for all $m\in B$. Applying the last argument inductively, we find an infinite subset $M$ of $\set{m\in\mathbb{N}: m\geq N}$ such that property (2) is satisfied, so we are done.
\end{proof}

%%% ----------------------------------------------------------------------
%                              SECTION ----  2  ------
%%% ----------------------------------------------------------------------

\section{}

This section is devoted to the proof of our main result. From now on, the spaces $X_n$ are assumed to be of finite dimension (each having a normalized 1-unconditional basis).

Let $Y$ be a (real) Banach space with $\dim Y=n$ and  $e_1, \ldots, e_n$ be a basis of $Y$. A sequence $y_m=\sum_{k=1}^{n}a_{mk}e_k$, $m\geq1$ in $Y$ will be called monotone if it is monotone in every coordinate.

We will usually write  $x=\sum_{n=1}^{\infty}x_n$ for the element $x=(x_1, \ldots, x_n, \ldots)$ of $X$.
\begin{prop}\label{prop2.1}
The unit ball $B_{X}$ of $X$ is compact in the topology of pointwise convergence defined in Definition \ref{def1.9}
\end{prop}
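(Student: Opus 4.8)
The plan is to realise $B_{X}$ as a closed subset of a compact product space for the pointwise topology. First, note that by Proposition \ref{prop1.1} and Proposition \ref{prop1.4}, if $x=(x_1,\ldots,x_n,\ldots)\in B_{X}$ then for every $m$
\[
\sum_{n=1}^{m}\norm{x_n}_n\leq 2\norm{(x_1,\ldots,x_m)}\leq 2\norm x\leq 2 ,
\]
so in particular $\norm{x_n}_n\leq 2$ for all $n$. Hence $B_{X}\subseteq K:=\prod_{n=1}^{\infty}2B_{X_n}$, where each $2B_{X_n}$ is compact because $X_n$ is finite dimensional. Being a countable product of compact metric spaces, $K$ is compact and metrizable in the product topology, and on $K$ this product topology is exactly the topology of pointwise convergence of Definition \ref{def1.9}. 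It therefore suffices to prove that $B_{X}$ is (sequentially) closed in $K$.

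So let $(x_m)_{m\geq1}\subseteq B_{X}$ with $x_m\overset{p}\to x$ for some $x=(x_1,\ldots,x_n,\ldots)\in K$; I must show $x\in X$ and $\norm x\leq 1$. For the membership, fix $N\in\mathbb{N}$: by the displayed estimate $\sum_{n=1}^{N}\norm{x_{mn}}_n\leq 2$ for every $m$, and letting $m\to\infty$ in this finite sum of convergent terms gives $\sum_{n=1}^{N}\norm{x_n}_n\leq 2$; since $N$ is arbitrary, $\sum_{n=1}^{\infty}\norm{x_n}_n\leq 2<\infty$. Consequently, by Proposition \ref{prop1.1} the partial sums $(x_1,\ldots,x_N)$, $N\geq1$, form a Cauchy sequence in $X$ (their increments have $X$-norm at most a tail of $\sum\norm{x_n}_n$), and since $(X_n)_{n\geq1}$ is a Schauder decomposition of $X$ (Proposition \ref{prop1.4}) their limit is an element of $X$ whose $n$-th coordinate is $x_n$; I identify $x$ with this element.

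It remains to check $\norm x\leq1$. Fix $N$. The map $(u_1,\ldots,u_N)\mapsto\norm{(u_1,\ldots,u_N)}$ is a norm on the finite dimensional space $X_1\oplus\cdots\oplus X_N$, hence continuous for coordinatewise (that is, pointwise) convergence; combined with the monotonicity of the decomposition (Proposition \ref{prop1.4}), which gives $\norm{(x_{m1},\ldots,x_{mN})}\leq\norm{x_m}\leq1$, I obtain
\[
\norm{(x_1,\ldots,x_N)}=\lim_{m\to\infty}\norm{(x_{m1},\ldots,x_{mN})}\leq 1 .
\]
Since $\norm{(x_1,\ldots,x_N)}\nearrow\norm x$ as $N\to\infty$ (Proposition \ref{prop1.4} together with the convergence of the partial sums to $x$), this yields $\norm x\leq1$, i.e. $x\in B_{X}$. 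Thus $B_{X}$ is closed in the compact metrizable space $K$, hence compact in the topology of pointwise convergence.

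The appeals to Tychonoff's theorem and to the continuity of a norm on a finite dimensional space are routine; the only genuinely load-bearing points are the identification of the pointwise limit $x$ with a bona fide element of the completion $X$ and the relation $\norm x=\sup_N\norm{(x_1,\ldots,x_N)}$, both of which rest on the Schauder decomposition structure already established in Propositions \ref{prop1.1} and \ref{prop1.4}. I expect no serious obstacle beyond this bookkeeping.
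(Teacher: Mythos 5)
Your proof is correct, but it reaches compactness by a genuinely different mechanism than the paper. The paper argues by duality: since each $X_n$ is finite dimensional it is isometric to some $Y_n^{*}$, so $Z=(\sum_{n}\oplus X_n)_1$ is the dual of $E=(\sum_{n}\oplus Y_n)_0$; the density of $c_{00}((Y_n))$ in $E$ identifies the weak* topology with the topology of pointwise convergence on bounded subsets of $X\cong Z$, Banach--Alaoglu then supplies compactness of a ball, and all that remains is to check that $B_{X}$ is weak*-closed --- which the paper does by essentially the same lower-semicontinuity argument as your last display (if $\norm{x}>1$ then some finite partial sum has norm $>1+\varepsilon$, whence $\norm{x_m}>1+\varepsilon/2$ for $m$ large, a contradiction). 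You instead embed $B_{X}$ in the Tychonoff-compact metrizable product $\prod_{n}2B_{X_n}$ and verify closedness there directly. This obliges you to handle one extra point that Alaoglu hands the paper for free: that the pointwise limit actually lies in $X$ at all (the weak* limit automatically lives in the dual $Z$), which you settle correctly via $\sum_{n}\norm{x_n}_n\leq 2$ and the Cauchyness of the partial sums coming from Proposition \ref{prop1.1}. The trade-off is that your route is more elementary and self-contained --- no preduals, no weak* topology, only Tychonoff for a countable product of finite-dimensional balls plus the Schauder decomposition facts already established --- at the cost of the membership bookkeeping, whereas the paper's route is shorter but rests on the identification of pointwise convergence with the weak* topology. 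Both arguments are sound.
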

\begin{proof}
Since each $X_n$ is finite dimensional, we have that $X_n$ is isometric to some $Y_{n}^{*}$. Set $E=(\sum_{n=1}^{\infty}\oplus Y_n)_0$, then clearly $E^{*}$ is isometric to $Z=(\sum_{n=1}^{\infty}\oplus X_n)_1$. The fact that the space $c_{00}((Y_n))$ is dense in $E$ implies that the weak* topology of the space $Z\cong X$ coincides on the bounded subsets of $X$ with the topology of pointwise convergence of Definition \ref{def1.9}. It follows that our assertion is immediate consequence of the following:
\\
\\
\textit{\underline{Claim.}
Let $(x_m)\subseteq X$ with $x_m=\sum_{n=1}^{\infty}x_{mn}$ and $\norm{x_{m}}\leq1$, for $m\in\mathbb{N}$. Let also $x\in X$ with $x=\sum_{n=1}^{\infty}x_n$ such that $x_m \overset{p}\rightarrow x$. Then $\norm x\leq 1$. (That is, $B_X$ is a weak* -- closed subset of $X$.)}
\\
\\
\textit{\underline{Proof of the Claim.}}
We assume that $\norm x>1$. Then, by Proposition \ref{prop1.4}, there exist $k\in\mathbb{N}$ and $\varepsilon>0$ such that
\begin{equation}\label{eq2.1}
\|\sum_{n=1}^{k}x_n\|>1+\varepsilon.
\end{equation}
Since $x_m\overset{p}\rightarrow x$, there also exists $m\in\mathbb{N}$ such that
\begin{equation}\label{eq2.2}
\norm{x_{mn}-x_n}_n<\frac{\varepsilon}{2k},\,\,\, n\leq k.
\end{equation}
Then
\begin{eqnarray*}
\norm{x_m} & = & \|\sum_{n=1}^{\infty}x_{mn}\| \\
& \geq & \|\sum_{n=1}^{k}x_{mn}\|, \,\, \textrm{by}\,\,\, \textrm{Proposition} \,\, \ref{prop1.4} \\
& = & \|\sum_{n=1}^{k}x_{mn}+\sum_{n=1}^{k}x_n-\sum_{n=1}^{k}x_n\| \\
& \geq & \|\sum_{n=1}^{k}x_{n}\|- \|\sum_{n=1}^{k}(x_{mn}-x_n)\| \\
& \geq & \|\sum_{n=1}^{k}x_n\|-\sum_{n=1}^{k}\norm{x_{mn}-x_n}_n\\
& > & 1+\varepsilon-\frac{\varepsilon}{2}=1+\frac{\varepsilon}{2}, \,\,\textrm{from}\,\,\, \eqref{eq2.1}\,\, \textrm{and}\,\, \eqref{eq2.2},
\end{eqnarray*}
a contradiction.
\end{proof}
\begin{thm}\label{thm2.2}
Let $(X_n)_{n\geq1}$ be a sequence of finite dimensional Banach spaces, each having an 1-unconditional basis. Then the Banach space $X$ does not contain an equilateral sequence.
\end{thm}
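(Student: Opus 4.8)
The strategy is to argue by contradiction, exploiting the two compactness tools already available — pointwise compactness of $B_X$ (Proposition \ref{prop2.1}) and the ``common representation'' lemma (Proposition \ref{prop1.10}) — to replace a hypothetical equilateral sequence by a very rigid one, and then to read off a numerical impossibility from the strict inequality of Remark \ref{rem1.2}.

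\textbf{Step 1: reduction to a structured equilateral set.} Suppose $(x_m)$ is equilateral with constant $\lambda>0$; rescaling, assume $\lambda=1$. From $\norm{x_m-x_1}=1$ ($m\geq2$) the sequence is bounded. Translating the equilateral set by $x_1$ and dropping the term $m=1$ produces an equilateral set $\set{0}\cup\set{w_m:m\geq1}$ with $\norm{w_m}=1=\norm{w_m-w_{m'}}$. Since $X$ is isometric to $Z=E^{*}$ with $E=(\sum_{n}\oplus Y_n)_0$ separable, $B_X$ is pointwise sequentially compact, so along a subsequence $w_m\overset{p}\to w$. If this limit were $0$ I would instead translate by $x_2$ (equivalently, by $w_1$) before passing to the limit; since the members of an equilateral set are pairwise distinct, this can always be arranged so that, after relabelling, we have an equilateral set $\set{0}\cup\set{w_m:m\geq1}$ with constant $1$, $\norm{w_m}=1=\norm{w_m-w_{m'}}$, and $w_m\overset{p}\to w$ with $w\neq0$.

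\textbf{Step 2: common representation.} Apply Proposition \ref{prop1.10} to the bounded sequence $(w_m)$ with non-zero pointwise limit $w$: after a further subsequence, all of $\set{w_m:m}\cup\set{w}$ have the same representation $(d_n)_{n\geq1}$, with $d_n=1-\frac1{n+1}$ for $n\geq k_0$ (some $k_0\geq2$), so $\sum_n d_n\norm{w_{mn}}_n=\norm{w_m}=1$ and $\sum_n d_n\norm{w_n}_n=\norm{w}$. Using Proposition \ref{prop1.5}(ii), for large $m$,
\[
1=\norm{w_m}=\norm{(w_{m1},\dots,w_{mk_0})}+\sum_{n>k_0}\Bigl(1-\tfrac1{n+1}\Bigr)\norm{w_{mn}}_n ,
\]
where $\norm{(w_{m1},\dots,w_{mk_0})}\to\norm{(w_1,\dots,w_{k_0})}>0$ and $\norm{w_{mn}}_n\to\norm{w_n}_n$ for every $n$; the same holds for $w$.

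\textbf{Step 3: the differences and the contradiction.} I would then analyse $w_m-w_{m'}$. For each fixed $n$, $\norm{w_{mn}-w_{m'n}}_n\to0$ as $m,m'\to\infty$, so every fixed-length head of $w_m-w_{m'}$ has norm $\to0$ while $\norm{w_m-w_{m'}}=1$; thus the mass of each difference sits in arbitrarily high coordinates, and the bulk of $w_m$ precedes the bulk of $w_{m'}$ once $m<m'$ are large. By a Ramsey-type passage to a nested subsequence (as in the proof of Proposition \ref{prop1.10}, now for the two-parameter family of differences) one may assume each $w_m-w_{m'}$ ($m<m'$) also has a stable representation. Taking for $w_m-w_{m'}$ a split point lying between those two bulks and applying Propositions \ref{prop1.5}(ii) and \ref{prop1.1}, one obtains — working with the exact vectors, not a gliding-hump approximation — an identity of the shape $\norm{w_m-w_{m'}}=\Phi_m+\Theta_m+\Phi'_{m'}$ in which: $\Phi_m$ and $\Phi'_{m'}$ are the ``clean sums'' $\sum\bigl(1-\frac1{n+1}\bigr)\norm{(w_m-w_{m'})_n}_n$ over the two blocks of coordinates, entirely governed by the $d_n$'s and the coordinate norms from Step 2; $\Theta_m>0$ is the strictly positive ``excess'' produced, exactly as in Remark \ref{rem1.2}, by the first non-zero coordinate of $w_m-w_{m'}$ (which lies in the block of $w_m$) together with the coordinates at which the maximum in the definition of the norm is attained by the $\norm{\cdot}_n/n$ term; and $\Phi'_{m'}$ carries no such excess, because the block of $w_{m'}$ is added on top of a vector of norm $\approx\norm{w}>0$. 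Moreover $\Theta_m\to0$, since it is dominated by $\sum_n\norm{(w_m-w_{m'})_n}_n/(n(n+1))$, a series with a fixed summable majorant whose general term tends to $0$. Now impose $\norm{w_m-w_{m'}}=1$ for every pair $m<m'$: fixing $m$ and varying $m'$ forces $\Phi'_{m'}$ constant in $m'$; then varying $m$ forces $\Phi_m$ constant, whence $\Theta_m=1-\Phi_m-\Phi'_{m'}$ is a fixed positive constant — contradicting $\Theta_m\to0$. Hence $X$ admits no equilateral sequence.

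\textbf{Where the work is.} The genuine difficulty is Step 3. The norm of $X$ is only lower semicontinuous for pointwise convergence (mass escapes to infinity), and the term $\frac1{n+1}\max\set{\norm{x_n}_n/n,\norm{(x_1,\dots,x_{n-1})}}$ forces a case distinction at every coordinate; so one must pass through several nested subsequences to decide all the relevant maxima uniformly — for the $w_m$ \emph{and} for their pairwise differences — and, crucially, arrange that the small excess terms $\Theta_m$ are not absorbed into error terms (which is exactly why one cannot afford a gliding-hump approximation and must argue with the exact vectors and their representations). Once everything is stabilised, the contradiction is pure bookkeeping with Propositions \ref{prop1.1}, \ref{prop1.5} and Remark \ref{rem1.2}.
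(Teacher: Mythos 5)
Your Steps 1--2 line up with the paper's own setup (normalize, extract a pointwise limit via Proposition \ref{prop2.1}, stabilize the representations via Proposition \ref{prop1.10}), but Step 3 --- which you correctly identify as the heart of the matter --- has a genuine gap, in fact two. First, the closing bookkeeping does not yield a contradiction: from $1=\Phi_m+\Theta_m+\Phi'_{m'}$ for all $m<m'$ you may conclude that $\Phi'_{m'}$ equals a constant $c$ and hence that $\Phi_m+\Theta_m=1-c$ for every $m$, but nothing forces $\Phi_m$ to be constant separately; the identity is perfectly compatible with $\Theta_m\to0^{+}$ and $\Phi_m$ increasing to $1-c$. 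Indeed the total ``excess'' of Remark \ref{rem1.2} for $w_m-w_{m'}$ is dominated by $\sum_n\norm{(w_m-w_{m'})_n}_n/(n(n+1))$, which tends to $0$ by dominated convergence, and the corresponding excesses for $w_m-w$ and $w_{m'}-w$ tend to $0$ as well, so all these small quantities can shrink consistently and no numerical impossibility appears.

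Second, and more fundamentally, the tripartite decomposition with ``the bulk of $w_m$ preceding the bulk of $w_{m'}$'' presupposes that $w_m-w$ lives (essentially) beyond any fixed initial block, i.e.\ that the heads of the $w_m$ agree with the head of $w$. That is precisely the situation the paper disposes of in its Step 3, where \emph{exact} agreement of heads lets Remark \ref{rem1.2} give $\norm{x_m-x_k}<\norm{x_m-x}+\norm{x_k-x}=1$ with no error terms to absorb the strictness. The complementary --- and main --- case is that some fixed coordinate $n_0$ satisfies $x_{mn_0}\neq x_{n_0}$ for all $m$ in a subsequence; there the differences retain persistent mass at the low coordinate $n_0$ and your block picture breaks down. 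The paper handles this case by passing diagonally to a subsequence that is monotone in every basis coordinate of $X_1,\dots,X_{n_0}$, using $1$-unconditionality to get $\norm{x_{mn}-x_{kn}}_n\leq\norm{x_{mn}-x_n}_n$ with strict inequality at $n_0$ for some pair (via a pigeonhole on the set of basis coordinates where equality with $x_{n_0}$ holds), and then comparing distances to the hybrid point $y=\sum_{n\leq n_0}x_{kn}+\sum_{n>n_0}x_n$, whose representation matches that of $x_m-x$ by its Step 4; this yields $\norm{x_m-x_k}\leq\norm{x_m-y}+\norm{x_k-y}<1$. Your proposal never invokes the finite-dimensionality of the $X_n$ or their $1$-unconditional bases --- the hypotheses that actually carry the theorem --- so this main case is simply not addressed.
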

\begin{proof}
We are going to prove that the assumption, $X$ contains an equilateral sequence, leads to a contradiction. So let $x_m=\sum_{n=1}^{\infty}x_{mn}$, $m\geq1$ be an equilateral sequence in $X$. First, we observe that we may assume that $\norm{x_m}=1$ and $\norm{x_m-x_k}=1$, for $m\neq k\in\mathbb{N}$. By Proposition \ref{prop2.1}, it may be assumed further that there exists $x\in X$ such that $x_m \overset{p}\rightarrow x$ and $\norm x\leq1$.

The proof that our assumption leads to a contradiction is divided in 5 steps.
\\
\\
\textit{\underline{Step 1.}
By passing to a subsequence, we may have that $\norm{x_m-x}=\frac{1}{2}$, for every $m\in\mathbb{N}$.}
\\
\\
\textit{\underline{Proof of Step 1.}}
We first observe that there exists at most one $m\in\mathbb{N}$ such that $\norm{x_m-x}<\frac{1}{2}$. Indeed, assuming that there exist $s\neq l\in \mathbb{N}$ such that $\norm{x_i-x}<\frac{1}{2}$, for $i=s, l$, we would have that
\[
1=\norm{x_s-x_l}\leq\norm{x_s-x}+\norm{x_l-x}<\frac{1}{2}+\frac{1}{2}=1,
\]
a contradiction. Therefore, we may assume that $\norm{x_m-x}\geq\frac{1}{2}$, for every $m\in\mathbb{N}$.

We are going to prove that $\norm{x_m-x}>\frac{1}{2}$, for finitely many $m\in\mathbb{N}$. We suppose that there exists a subsequence of $(x_m)$, denoted again by $(x_m)$, such that $\norm{x_m-x}>\frac{1}{2}$, for every $m\in\mathbb{N}$. Then for every $m\in\mathbb{N}$ there exists $b_m>0$ such that
\begin{equation}\label{eq2.3}
\norm{x_m-x}=\frac{1}{2}+b_m.
\end{equation}
Since $\norm x\leq1$ and $\norm{x_m}=1$, for every $m\in\mathbb{N}$, by \eqref{eq2.3} we take that
\begin{equation}\label{eq2.4}
0<b_m\leq\frac{3}{2}, \,\,\,\textrm{for\,\, every} \,\,m\in\mathbb{N}.
\end{equation}
We fix $m\in\mathbb{N}$. Then there exist $k\in\mathbb{N}$ and $l\geq k$ such that
\begin{equation}\label{eq2.5}
\|\sum_{n=1}^{k}(x_{mn}-x_n)\|>\frac{1}{2}+\frac{b_m}{2}
\end{equation}
and
\begin{equation}\label{eq2.6}
\|\sum_{n=l+1}^{\infty}(x_{mn}-x_n)\|<\frac{b_m}{8}.
\end{equation}
Indeed, both \eqref{eq2.5} and \eqref{eq2.6} are consequences of the fact that $\sum_{n=1}^{k}(x_{mn}-x_n)\overset{\norm\cdot}\rightarrow x_m-x$, $k\rightarrow \infty$. Further, $l$ can be chosen quite large such that
\begin{equation}\label{eq2.7}
\frac{4}{n}<\frac{1}{2}+\frac{b_m}{2}
\end{equation}
and
\begin{equation}\label{eq2.8}
\frac{2}{n+2}<\frac{b_m}{8},\,\, \textrm{for\,\, every}\,\,n\geq l.
\end{equation}
We note that \eqref{eq2.7} and \eqref{eq2.4} yield that
\begin{equation}\label{eq2.9}
\frac{2(\frac{1}{2}+b_j)}{n}<\frac{1}{2}+\frac{b_m}{2}, \,\,\textrm{for\,\, every} \,\, n\geq l \,\,\textrm{and} \,\, j\in\mathbb{N}.
\end{equation}
Since $x_m\overset{p}\rightarrow x$, we can choose $s\in\mathbb{N}$ such that
\begin{equation}\label{eq2.10}
\|\sum_{n=1}^{l}(x_{sn}-x_n)\|<\frac{b_m}{8}.
\end{equation}
The last, by Proposition \ref{prop1.4}, yields that
\[
\|\sum_{n=1}^{k}(x_{sn}-x_n)\|<\frac{b_m}{8},\,\, \textrm{since}\,\,l\geq k.
\]
We have that $\|\sum_{n=1}^{\infty}(x_{sn}-x_n)\|=\norm{x_s-x}=\frac{1}{2}+b_s$. So, by Proposition \ref{prop1.3}, $\norm{x_{sn}-x_n}_{n}\leq2(\frac{1}{2}+b_s)$, for every $n\in\mathbb{N}$. The last, combined with \eqref{eq2.9} and \eqref{eq2.5}, gives that
\[
\frac{\norm{x_{sn}-x_n}_n}{n}\leq\frac{1}{2}+\frac{b_m}{2}<
\|\sum_{n=1}^{k}(x_{mn}-x_n)\|,\,\,\,\textrm{for\,\,every}\,\, n\geq l.
\]
Now, by ii) of Proposition \ref{prop1.5}, we take that
\begin{equation}\label{eq2.11}
\begin{split}
\|\sum_{n=1}^{k}(x_{mn}-x_n)+\sum_{n=l+1}^{\infty}(x_n-x_{sn})\|=
\|\sum_{n=1}^{k}(x_{mn}-x_n)\|+ \\
& \!\!\!\!\!\!\!\!\!\!\!\!\!\!\!\!\!\!\!\!\!\!\!\!\!\!\!\!\!
+ \sum_{n=l+1}^{\infty}\left(1-\frac{1}{n+1}\right)\norm{x_n-x_{sn}}_n.
\end{split}
\end{equation}
Finally, we have that,
\begin{eqnarray*}
\norm{x_m-x_s} & = & \|\sum_{n=1}^{\infty}(x_{mn}-x_{sn})\| \\
& \geq & \|\sum_{n=1}^{k}(x_{mn}-x_{sn})+\sum_{n=l+1}^{\infty}(x_{mn}-x_{sn})\|,
\,\,\,\textrm{by Proposition}\,\, \ref{prop1.4}
\end{eqnarray*}
\begin{eqnarray*}
& = & \|\sum_{n=1}^{k}(x_{mn}-x_{sn})+\sum_{n=l+1}^{\infty}(x_{mn}-x_{sn})\pm
\sum_{n=1}^{k}(x_n-x_{sn})\pm\sum_{n=l+1}^{\infty}(x_{mn}-x_n)\|\\
& \geq & \|\sum_{n=1}^{k}(x_{mn}-x_n)+\sum_{n=l+1}^{\infty}(x_n-x_{sn})\|-
\|\sum_{n=1}^{k}(x_n-x_{sn})\|-\|\sum_{n=l+1}^{\infty}(x_{mn}-x_n)\|\\
& \geq & \|\sum_{n=1}^{k}(x_{mn}-x_n)\|+\sum_{n=l+1}^{\infty}\left(1-\frac{1}{n+1}\right)\|x_n-x_{sn}\|_n
-\frac{b_m}{4}, \,\,\textrm{by}\,\,\eqref{eq2.6},\,\eqref{eq2.10}\,\,\textrm{and}\,\, \eqref{eq2.11} \\
& \geq & \|\sum_{n=1}^{k}(x_{mn}-x_n)\|+\left(1-\frac{1}{l+2}\right)\sum_{n=l+1}^{\infty}\|x_n-x_{sn}\|_{n}
-\frac{b_m}{4} \\
& \geq & \|\sum_{n=1}^{k}(x_{mn}-x_n)\|+\left(1-\frac{1}{l+2}\right)\|\sum_{n=l+1}^{\infty}(x_n-x_{sn})\|
-\frac{b_m}{4},
\end{eqnarray*}
by the right hand side inequality of Proposition \ref{prop1.1}
\begin{eqnarray*}
& = & \|\sum_{n=1}^{k}(x_{mn}-x_n)\|+\left(1-\frac{1}{l+2}\right) \|\pm\sum_{n=1}^{l}(x_n-x_{sn})+\sum_{n=l+1}^{\infty}(x_n-x_{sn})\|-\frac{b_m}{4}\\
& = & \|\sum_{n=1}^{k}(x_{mn}-x_n)\|+\left(1-\frac{1}{l+2}\right)\|\sum_{n=1}^{\infty}(x_n-x_{sn})
-\sum_{n=1}^{l}(x_n-x_{sn})\|-\frac{b_m}{4}\\
& \geq & \|\sum_{n=1}^{k}(x_{mn}-x_n)\|+\left(1-\frac{1}{l+2}\right)\|\sum_{n=1}^{\infty}(x_n-x_{sn})\|-
\|\sum_{n=1}^{l}(x_n-x_{sn})\|-\frac{b_m}{4}\\
& > & \frac{1}{2}+\frac{b_m}{2}+\left(1-\frac{1}{l+2}\right)\left(\frac{1}{2}+b_s\right)-
\left(1-\frac{1}{l+2}\right)\frac{b_m}{8} - \frac{b_m}{4}, \,\,\textrm{by}\,\, \eqref{eq2.5}\,\, \textrm{and}\,\, \eqref{eq2.10}\\
& = & \frac{1}{2}+\frac{b_m}{4}+\left(1-\frac{1}{l+2}\right)\left(\frac{1}{2}+b_s\right)-
\left(1-\frac{1}{l+2}\right)\frac{b_m}{8} \\
& \geq & \frac{1}{2}+\frac{b_m}{4}+\frac{1}{2}+b_s-\frac{1}{l+2}\left(\frac{1}{2}+b_s\right)-
\left(1-\frac{1}{l+2}\right)\frac{b_m}{8} \\
& \geq & 1+\frac{b_m}{4}+b_s-\frac{1}{l+2}\left(\frac{1}{2}+\frac{3}{2}\right)-
\left(1-\frac{1}{l+2}\right)\frac{b_m}{8},\,\, \textrm{since\,\,by}\,\,\eqref{eq2.4}\,\,b_s\leq\frac{3}{2} \\
& \geq & 1+\frac{b_m}{4}+b_s-2\frac{b_m}{8}, \,\,\textrm{by}\,\,\eqref{eq2.8}\\
& = & 1+b_s>1,
\end{eqnarray*}
a contradiction.
\\
\\
\textit{\underline{Step 2.}}
$\norm x=\frac{1}{2}$.
\\
\\
\underline{\textit{Proof of Step 2.}}
For any $m\in\mathbb{N}$,
\[
\frac{1}{2}=\norm{x_m-x}\geq\norm{x_m}-\norm x=1-\norm x
\]
hence, $\norm x\geq\frac{1}{2}$. We assume that $\norm x>\frac{1}{2}$. By our assumption there exist $b>0$ and $k\in\mathbb{N}$ such that
\begin{equation}\label{eq2.12}
\|\sum_{n=1}^{k}x_n\|>\frac{1}{2}+b
\end{equation}
and
\begin{equation}\label{eq2.13}
\|\sum_{n=k+1}^{\infty}x_n\|<\frac{b}{4}.
\end{equation}
In addition, $k$ can be chosen large enough such that
\begin{equation}\label{eq2.14}
\frac{1}{2(k+2)}<\frac{b}{4}
\end{equation}
and
\begin{equation}\label{eq2.15}
\frac{\|x_{mn}-x_n\|_{n}}{n}<\|\sum_{n=1}^{k}x_n\|, \,\, \textrm{for\,\, every}\,\,m\in\mathbb{N}\,\,\textrm{and}\,\, n\geq k.
\end{equation}
\eqref{eq2.12} and \eqref{eq2.13} come directly by the fact $\sum_{n=1}^{s}x_n \overset{\norm\cdot}\to x$, $s\to \infty$. For \eqref{eq2.15}, by Proposition \ref{prop1.1},
\[
\sum_{n=1}^{\infty}\frac{1}{2}\|x_{mn}-x_n\|_n\leq\|x_m-x\|=1,\,\, \textrm{for\,\, every}\,\,m\in\mathbb{N},
\]
so $\sum_{n=1}^{\infty}\norm{x_{mn}-x_n}_{n}\leq2$ and consequently, $\norm{x_{mn}-x_n}_{n}\leq 2$, for every $m, n\in \mathbb{N}$. Then there exists $s\in\mathbb{N}$ such that $\frac{\|x_{mn}-x_n\|_n}{n}<\frac{1}{2}+b$, for every $m\in\mathbb{N}$ and $n\geq s$.

By the pointwise convergence of $(x_m)$ to $x$, we choose $m\in\mathbb{N}$ such that
\begin{equation}\label{eq2.16}
\|\sum_{n=1}^{k}(x_{mn}-x_n)\|<\frac{b}{4}.
\end{equation}
Now we have that
\begin{eqnarray*}
\norm{x_{m}} & = & \|\sum_{n=1}^{\infty}x_{mn}\|  \\
& = & \|\sum_{n=1}^{\infty}x_{mn}\pm\sum_{n=1}^{\infty}x_n\| \\
& = & \|\sum_{n=1}^{k}(x_{mn}-x_n)+\sum_{n=k+1}^{\infty}(x_{mn}-x_{n})+
\sum_{n=1}^{k}x_n+\sum_{n=k+1}^{\infty}x_{n}\| \\
& \geq & \|\sum_{n=1}^{k}x_n +\sum_{n=k+1}^{\infty}(x_{mn}-x_n)\|-\|\sum_{n=1}^{k}(x_{mn}-x_n)\|-\|\sum_{n=k+1}^{\infty}x_n\| \\
& \geq & \|\sum_{n=1}^{k}x_n+\sum_{n=k+1}^{\infty}(x_{mn}-x_n)\|-\frac{b}{2},
\,\,\textrm{by}\,\, \eqref{eq2.13}\,\, \textrm{and}\,\, \eqref{eq2.16}\\
& \geq & \|\sum_{n=1}^{k}x_n\|+\left(1-\frac{1}{k+2}\right)\sum_{n=k+1}^{\infty}\|x_{mn}-x_n\|_n-\frac{b}{2},
\,\,\textrm{by}\,\, \eqref{eq2.15}\\
& & \,\,\,\textrm{and\,\, ii)\,\, of \,\,Proposition} \,\, \ref{prop1.5} \\
& \geq & \|\sum_{n=1}^{k}x_n\|+\left(1-\frac{1}{k+2}\right)\|\sum_{n=k+1}^{\infty}(x_{mn}-x_n)\|-\frac{b}{2},\,\,\,
\textrm{by\,\,Proposition} \,\,\ref{prop1.1}\\
& = & \|\sum_{n=1}^{k}x_n\|+\left(1-\frac{1}{k+2}\right)\|x_m-x-\sum_{n=1}^{k}(x_{mn}-x_n)\|-\frac{b}{2} \\
& \geq & \|\sum_{n=1}^{k}x_n\|+\left(1-\frac{1}{k+2}\right)
\left(\|x_m-x\|-\|\sum_{n=1}^{k}(x_{mn}-x_n)\|\right)-\frac{b}{2} \\
& > & \frac{1}{2}+b+\left(1-\frac{1}{k+2}\right)\left(\frac{1}{2}-\frac{b}{4}\right)-
\frac{b}{2},\,\, \textrm{by}\,\,\eqref{eq2.12}\,\,\textrm{and}\,\,\eqref{eq2.16}  \\
& = & \frac{1}{2}+b+\frac{1}{2}-\frac{1}{2(k+2)}-\frac{b}{4}+\frac{b}{4(k+2)}-\frac{b}{2}\\
& > & 1+\frac{b}{4(k+2)}, \,\,\textrm{by}\,\, \eqref{eq2.14},
\end{eqnarray*}
a contradiction.
\\
\\
\textit{\underline{Step 3.}
The following assertion is not true. For every $l\in\mathbb{N}$ there exists $m_0\in\mathbb{N}$ such that $x_{mn}=x_n$, for every $n\leq l$ and $m\geq m_0$.}
\\
\\
\textit{\underline{Proof of Step 3.}}
We assume that the assertion is true. Then there exists a subsequence of $(x_m)$, denoted again by $(x_m)$, and a strictly increasing sequence $(t(m))$ in $\mathbb{N}$ such that
\begin{equation}\label{eq2.17}
x_m=\sum_{n=1}^{t(m)}x_n+\sum_{n=t(m)+1}^{\infty}x_{mn},\,\, \textrm{for\,\, every}\,\, m\in\mathbb{N}.
\end{equation}
By Remark \ref{rem1.2} and \eqref{eq2.17}, we take that
\begin{equation}\label{eq2.18}
\norm{x_m-x}=\|\sum_{n=t(m)+1}^{\infty}(x_{mn}-x_n)\|>
\sum_{n=t(m)+1}^{\infty}\left(1-\frac{1}{n+1}\right)\|x_{mn}-x_n\|_n,
\end{equation}
for every $m\in\mathbb{N}$. We now fix $m\in\mathbb{N}$. Then, by iii) of Proposition \ref{prop1.5}, there exists $k_0>m$ such that
\begin{equation}\label{eq2.19}
\|x_m-x\|=\|\sum_{n=t(m)+1}^{t(k)}(x_{mn}-x_n)\|+
\sum_{n=t(k)+1}^{\infty}\left(1-\frac{1}{n+1}\right)\norm{x_{mn}-x_n}_n,
\end{equation}
for every  $k\geq k_0$.
Since by \eqref{eq2.18} $\|\sum_{n=t(m)+1}^{s}(x_{mn}-x_n)\| \to \|x_m-x\|=\frac{1}{2}$, we choose $k\geq k_0$ such that
\begin{equation}\label{eq2.20}
\|\sum_{n=t(m)+1}^{t(k)}(x_{mn}-x_{kn})\|=\|\sum_{n=t(m)+1}^{t(k)}(x_{mn}-x_n)\|>\frac{1}{4}
\end{equation}
and
\begin{equation}\label{eq2.21}
\frac{2}{t(k)}<\frac{1}{4}.
\end{equation}
Since by Proposition \ref{prop1.1}, $\frac{1}{2}\sum_{n=1}^{\infty}\|x_{mn}-x_{kn}\|_{n}\leq\|x_m-x_k\|=1$, we take that $\norm{x_{mn}-x_{kn}}_n\leq2$, for every $n\in\mathbb{N}$. Therefore
\begin{equation}\label{eq2.22}
\frac{\norm{x_{mn}-x_{kn}}_n}{n}\leq\frac{2}{t(k)}<\frac{1}{4},\,\, \textrm{for\,\, every}\,\, n\geq t(k).
\end{equation}
Properties \eqref{eq2.20}, \eqref{eq2.21}, \eqref{eq2.22} and ii) of Proposition \ref{prop1.5} now give that

\begin{eqnarray*}
\norm{x_m-x_k} & = & \|\sum_{n=t(m)+1}^{t(k)}(x_{mn}-x_{kn})\| + \sum_{n=t(k)+1}^{\infty}\left(1-\frac{1}{n+1}\right)\norm{x_{mn}-x_{kn}}_n \\
& = & \|\sum_{n=t(m)+1}^{t(k)}(x_{mn}-x_{n})\| + \sum_{n=t(k)+1}^{\infty}\left(1-\frac{1}{n+1}\right)\norm{x_{mn}-x_{kn}}_n,\\
& & \textrm{by}\,\, \eqref{eq2.20} \\
& = & \|\sum_{n=t(m)+1}^{t(k)}(x_{mn}-x_{n})\| + \sum_{n=t(k)+1}^{\infty}\left(1-\frac{1}{n+1}\right)\norm{x_{mn}-x_{kn}}_n\pm \\
& & \qquad\qquad\qquad\qquad\qquad
\pm\sum_{n=t(k)+1}^{\infty}\left(1-\frac{1}{n+1}\right)\norm{x_{mn}-x_n}_n \\
& \leq & \|\sum_{n=t(m)+1}^{t(k)}(x_{mn}-x_{n})\| + \sum_{n=t(k)+1}^{\infty}\left(1-\frac{1}{n+1}\right)\norm{(x_{mn}-x_{kn})-(x_{mn}-x_n)}_n\\
& & \qquad\qquad\qquad\qquad\qquad
+\sum_{n=t(k)+1}^{\infty}\left(1-\frac{1}{n+1}\right)\norm{x_{mn}-x_n}_n \\
& = & \|\sum_{n=t(m)+1}^{t(k)}(x_{mn}-x_{n})\| + \sum_{n=t(k)+1}^{\infty}\left(1-\frac{1}{n+1}\right)\norm{x_{n}-x_{kn}}_n \\
& & \qquad\qquad\qquad\qquad\qquad
+\sum_{n=t(k)+1}^{\infty}\left(1-\frac{1}{n+1}\right)\norm{x_{mn}-x_n}_n \\
& = & \norm{x_m-x}+\sum_{n=t(k)+1}^{\infty}\left(1-\frac{1}{n+1}\right)\norm{x_n-x_{kn}}_n ,\,\, \textrm{by}\,\, \eqref{eq2.19}  \\
& < & \norm{x_m-x}+\norm{x_k-x},\,\, \textrm{by}\,\, \eqref{eq2.18} \\
& = & \frac{1}{2}+\frac{1}{2}=1,
\end{eqnarray*}
a contradiction.

Step 3 informs us that there exists $l\in\mathbb{N}$ such that for every $m\in \mathbb{N}$ there exist $n\leq l$ and $k\geq m$ such that $x_{kn}\neq x_n$. Since the set $\set{1, \ldots, l}$ is finite there exist $n_0 \leq l$ and a subsequence of $(x_m)$, denoted again by $(x_m)$, such that $x_{mn_{0}}\neq x_{n_{0}}$, for every $m\in\mathbb{N}$.
By iii) of Proposition \ref{prop1.5}, there exists a strictly increasing sequence $(t(m))$ in $\mathbb{N}$ such that $t(1)\geq n_{0}$ and
\[
\norm{x_m-x}=\|\sum_{n=1}^{t(m)}(x_{mn}-x_{n})\|+
\sum_{n=t(m)+1}^{\infty}\left(1-\frac{1}{n+1}\right)\norm{x_{mn}-x_n}_n,
\,\,\textrm{for\,\,every}\,\, m\in\mathbb{N}.
\]
Corollary \ref{cor1.6} now gives that for every $m\in\mathbb{N}$ there exists $(A_{mn})_{n=1}^{t(m)}\subseteq[0,1]$ such that
\[
\norm{x_m-x}=\sum_{n=1}^{t(m)}A_{mn}\norm{x_{mn}-x_n}_{n}+
\sum_{n=t(m)+1}^{\infty}\left(1-\frac{1}{n+1}\right)\norm{x_{mn}-x_n}_n.
\]
\newline
\textit{\underline{Step 4.}
Let $m\in\mathbb{N}$, then there exists an infinite subset $M$ of $\mathbb{N}$ such that
\[
\|\sum_{n=1}^{n_0}(x_{mn}-x_{kn})+\sum_{n=n_0+1}^{\infty}(x_{mn}-x_n)\| =
\]
\[
=
\sum_{n=1}^{n_0}A_{mn}\norm{x_{mn}-x_{kn}}_n+\sum_{n=n_{0}+1}^{t(m)}A_{mn}\norm{x_{mn}-x_n}_n+
\sum_{n=t(m)+1}^{\infty}\left(1-\frac{1}{n+1}\right)\norm{x_{mn}-x_n}_n,
\]
for every $k\in M$.
}
\\
\\
\textit{\underline{Proof of Step 4.}}
We observe that $\sum_{n=1}^{n_0}(x_{mn}-x_{kn})+\sum_{n=n_0+1}^{\infty}(x_{mn}-x_n)\overset{p}\to x_m-x$, $k\to\infty$. Now the conclusion is direct from Proposition \ref{prop1.10}.
\\
\\
For the remaining of this proof we are going to assume that $(x_{mn})_{m\geq1}\subseteq X_{n}$ is monotone for every $1\leq n\leq n_0$. This is proved by a standard diagonal argument.
\\
\\
\textit{\underline{Step 5.}
For $m\in\mathbb{N}$ we denote by $M_{m}$ an infinite subset of $\mathbb{N}$ as in Step 4. There exist $m\in\mathbb{N}$ and $k\in M_{m}$ such that}
\[
\sum_{n=1}^{n_{0}}A_{mn}\norm{x_{mn}-x_{kn}}_n<\sum_{n=1}^{n_0}A_{mn}\norm{x_{mn}-x_n}_n.
\]
\textit{\underline{Proof of Step 5.}}
Firstly, it is obvious that we can assume that $m<\min M_{m}$, for $m\in\mathbb{N}$. Also we may assume that $M_{k'}\subseteq M_k$, for $k<k'\in\mathbb{N}$. Let now $m\in\mathbb{N}$, $d_n=\dim X_n$ and $\set{e_{1}^{n}, \ldots, e_{d_n}^{n}}$ be an 1-unconditional basis of $X_n$, for $1\leq n\leq n_0$. Let also
\[
x_{sn}=\sum_{j=1}^{d_n}a_{sn_j}e_{j}^{n}
\,\,\,\,
\textrm{and}
\,\,\,\,
x_n=\sum_{j=1}^{d_n}a_{n_j}e_{j}^{n}
\]
be the expressions of the vectors $x_{sn}\in X_n$ and $x_n \in X_n$, respectively, on the basis $\set{e_{1}^{n}, \ldots, e_{d_n}^{n}}$, for $s\in\mathbb{N}$ and $1\leq n\leq n_0$. By the monotonicity of the sequences $(x_{mn})_{m\geq1}$,
for $1\leq n\leq n_0$, we have that $|a_{mn_{j}}-a_{kn_{j}}|\leq|a_{mn_{j}}-a_{n_j}|$ for any $k\in M_{m}$, $1\leq n\leq n_0$ and $1\leq j\leq d_n$. Thus for $k\in M_{m}$ there exist $(\lambda_{n_{j}}^{k})_{j=1}^{d_n}\subseteq [-1,1]$ such that
\[
x_{mn}-x_{kn}=\sum_{j=1}^{d_n}(a_{mn_{j}}-a_{kn_{j}})e_{j}^{n}=
\sum_{j=1}^{d_n}\lambda_{n_j}^{k}(a_{mn_{j}}-a_{n_j})e_{j}^{n}
\]
for $1\leq n\leq n_0$, with $\lambda_{n_{j}}^{k}=0$ when $a_{mn_{j}}=a_{n_j}$.

In fact $(\lambda_{n_{j}}^{k})_{j=1}^{d_n}\subseteq (-1,1]$ for any $k\in M_{m}$. Indeed, assuming that $\lambda_{n_{j}}^{k}=-1$, for some $1\leq n\leq n_0$ and $1\leq j\leq d_n$, then $a_{mn_{j}}\neq a_{n_j}$ and $2a_{mn_{j}}=a_{n_j}+a_{kn_{j}}$, a contradiction by the monotonicity of the sequences $(x_{mn})_{m\geq1}$, for $1\leq n\leq n_0$. Now we are going to use the following result: for any unconditional basis $(e_i)$ in a Banach space $X$, for all $m\in \mathbb{N}$, all scalars $(a_i)_{i=1}^{l}$ and all sequences $(\lambda_i)_{i=1}^{l}$, we have
\[
\|\sum_{i=1}^{l}\lambda_ia_ie_i\|\leq ubc\set{e_i}\max_{1\leq i\leq l}{|\lambda_i|} \cdot \|\sum_{i=1}^{l}a_ie_i\|,
\]
where $ubc\set{e_i}$ is the unconditional basis constant of the basis $(e_i)$. A proof of the last can be found in \cite{1}. By the 1-unconditionality of $\set{e_{1}^{n}, \ldots, e_{d_n}^{n}}$ and the above inequality, we have
\begin{eqnarray*}
\norm{x_{mn}-x_{kn}}_n & = & \|\sum_{j=1}^{d_n}\lambda_{n_j}^{k}(a_{mn_{j}}-a_{n_j})e_{j}^{n}\| \\
& \leq & ubc\set{e_{j}^{n}}\max_{1\leq j\leq d_n}|\lambda_{n_{j}}^{k}|\cdot \|\sum_{j=1}^{d_n}(a_{mn_{j}}-a_{n_j})e_{j}^{n}\|_n \\
& \leq & \max_{1\leq j\leq d_n}|\lambda_{n_{j}}^{k}|\cdot
\|\sum_{j=1}^{d_n}(a_{mn_{j}}-a_{n_j})e_{j}^{n}\|_n \\
& = & \max_{1\leq j\leq d_n}|\lambda_{n_{j}}^{k}|\cdot\|x_{mn}-x_n\|_n,
\end{eqnarray*}
for every $k\in M_m$ and $1\leq n\leq n_0$ \eqref{eq2.17}.

Let us assume that for every $m\in\mathbb{N}$ and $k\in M_m$, $\max_{1\leq j\leq d_{n_{0}}}|\lambda_{n_{0}j}^{k}|=1$. We put $B_s=\set{j\leq d_{n_{0}}: a_{sn_{0}j}=a_{n_{0}j}}$, for $s\in\mathbb{N}$. Now for $m\in\mathbb{N}$ and $k\in M_m$ the choice of $\lambda_{n_{0}j}^{k}$ and our last assumption yield that $B_m\varsubsetneq B_{k}$. So for $m=1$ and $k\in M_1$, we have that $B_1 \varsubsetneq B_{k}$. For $k'\in M_k$ we will also have that $B_{k}\varsubsetneq B_{k'}$. Inductively, since the set $\set{1, \ldots, d_{n_{0}}}$ is finite, we can find $s\in M_1$ such that $B_s=\set{1, \ldots, d_{n_{0}}}$, a contradiction, since $x_{sn_0}\neq x_{n_0}$, for every $s\in \mathbb{N}$.

The last along with \eqref{eq2.17} entails that there exist $m\in\mathbb{N}$ and $k\in M_{m}$ such that
\[
\norm{x_{mn_{0}}-x_{kn_{0}}}_{n_{0}}<\norm{x_{mn_{0}}-x_{n_{0}}}_{n_{0}},
\]
then by \eqref{eq2.17}, we get that
\[
\sum_{n=1}^{n_0}A_{mn}\norm{x_{mn}-x_{kn}}_n<\sum_{n=1}^{n_0}A_{mn}\norm{x_{mn}-x_n}_n.
\]
The proof of Step 5 is complete.
\\
\\
Finally, we are in position to prove that our initial assumption leads to a contradiction. Let $m\in\mathbb{N}$ and $k\in M_m$, as in Step 5. We put $y=\sum_{n=1}^{n_0}x_{kn}+\sum_{n=n_0+1}^{\infty}x_n$, then by Step 4
\begin{eqnarray*}
\norm{x_m-y} & = & \sum_{n=1}^{n_0}A_{mn}\norm{x_{mn}-x_{kn}}_n+\sum_{n=n_0+1}^{t(m)}A_{mn}\norm{x_{mn}-x_n}_n\\
& & \qquad\qquad\qquad\qquad\qquad
+ \sum_{t(m)+1}^{\infty}\left(1-\frac{1}{n+1}\right)\norm{x_{mn}-x_n}_n \\
& < & \sum_{n=1}^{n_0}A_{mn}\norm{x_{mn}-x_{n}}_n+\sum_{n=n_0+1}^{t(m)}A_{mn}\norm{x_{mn}-x_n}_n+\\
& & \qquad\qquad\qquad
+ \sum_{t(m)+1}^{\infty}\left(1-\frac{1}{n+1}\right)\norm{x_{mn}-x_n}_n, \,\,\textrm{by\,\, Step}\,\, 5\\
& =& \norm{x_m-x}. \,\,(*)
\end{eqnarray*}
Also,
\[
\norm{x_k-y}=\|\sum_{n=n_0+1}^{\infty}(x_{kn}-x_{n})\|\leq \|\sum_{n=1}^{\infty}(x_{kn}-x_n)\|=\|x_k-x\|.
\]
The last combined with (*) gives that
\begin{eqnarray*}
\norm{x_m-x_k} & \leq & \norm{x_{m}-y}+\norm{x_{k}-y} \\
& < & \norm{x_{m}-x}+\norm{x-x_k} \\
& = & \frac{1}{2}+\frac{1}{2} =1,
\end{eqnarray*}
a contradiction.
\end{proof}

\begin{cor}\textrm{(Terenzi \cite{4})}
There exists an equivalent norm $\norm\cdot$ on $l_1$ such that $(l_1, \norm\cdot)$ does not contain an equilateral sequence.
\end{cor}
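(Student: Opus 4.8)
The plan is to obtain the corollary as the special case of Theorem \ref{thm2.2} in which every $X_n$ is one-dimensional. Concretely, I would set $X_n = \Real$ for every $n \geq 1$, equipped with its absolute-value norm; this is a finite dimensional Banach space whose single basis vector $\set{1}$ has norm $1$ and is trivially a $1$-unconditional basis, so the standing assumptions of Section 2 and the hypotheses of Theorem \ref{thm2.2} are all satisfied. With this choice the space $Z = (\sum_{n=1}^{\infty} \oplus X_n)_1$ is, by its very definition, isometrically $l_1$.

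Next I would invoke Proposition \ref{prop1.3}: the completion $X$ of $(c_{00}((X_n)), \norm\cdot)$ is $2$-isomorphic to $Z = l_1$, and in fact, as noted immediately after Proposition \ref{prop1.3}, the norm $\norm\cdot$ is nothing but an equivalent norm on $l_1$. Thus $(X, \norm\cdot)$ \emph{is} $l_1$ renormed equivalently.

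Finally, Theorem \ref{thm2.2} applied to this sequence $(X_n)_{n \geq 1}$ asserts precisely that $X$ contains no equilateral sequence. Hence the equivalent norm $\norm\cdot$ on $l_1$ produced above has the required property, and the corollary follows. There is no genuine obstacle here: the only points to check are the two routine observations that $\Real$ meets the standing assumptions of Section 2 and that the $\ell_1$-sum of countably many copies of $\Real$ is $l_1$, both of which are immediate.
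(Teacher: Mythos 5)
Your proposal is correct and follows exactly the paper's own argument: take $X_n=\Real$ for all $n$ in Theorem \ref{thm2.2}, observe via Proposition \ref{prop1.3} that the resulting norm on $X$ is an equivalent norm on $l_1$, and conclude. Nothing is missing.
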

\begin{proof}
We take $X_n=\Real$, $n\geq1$, in Theorem \ref{thm2.2}, then clearly $X\cong l_1$ and Terenzi's norm on $X$ satisfies the assertion.
\end{proof}
As it was mentioned in the Introduction the most part of our proof is essentially the same with Terenzi’s. The distinction between the two proofs lays on Step 5 of Theorem \ref{thm2.2}. Terenzi considers 1-dimensional Banach spaces, thus to prove Step 5 it suffices to observe that $x_{sn_0}\neq x_{n_0}$, for every $s\in \mathbb{N}$ and since the sequences $(x_{mn})_{m\geq1}$, for $1\leq n\leq n_0$ are monotone, we have that for $1\leq n\leq n_0$ either $x_{mn}\leq x_{kn}\leq x_n$ or $x_{mn}\geq x_{kn}\geq x_n$, where the inequalities are strict when $n=n_0$.

Terenzi constructed still another renorming of $l_1$ not admitting any equilateral sequence \cite{5}. Using Terenzi’s methods it can be proved that the space $l_1$ equipped with its canonical norm contains infinite dimensional subspaces which do not contain an equilateral sequence. By \cite{3} we can choose a weak* closed stictly convex subspace $X$ of $(l_1, || \cdot ||_1 )$ and we assume that there exists a normalized 1- equilateral sequence $(x_n)$ in $X$. The proof of Step 1, which here can be simplified, yields that there exists $x \in X$ with $||x|| \le 1$ such that $||x_n-x||=\frac{1}{2}$ , for $n \in \Bbb{N}$. Then for $n \in \Bbb{N}$
\[||(x_1-x) -(x_n-x)||=1=||x_1-x||+||x_n-x||. \]
By the strict convexity of $X$ this implies that $x_1-x=-(x_n-x)$ or $x_n=2x-x_1$, for every $n \in \Bbb{N}$, a contradiction. The aforementioned result and its proof were suggested to us from Professor P. Dowling. We include it here with his kind permission.

Theorem \ref{thm2.2} above provides us with further examples (beyond the space $l_1$) of Banach spaces, which do not contain any equilateral sequence.

\begin{cor}
There exists a separable Banach space with an unconditional basis not isomorphic to a subspace of $l_1$ and not containing an equilateral sequence.
\end{cor}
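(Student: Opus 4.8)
The plan is to invoke Theorem~\ref{thm2.2} for the sequence $X_n=\ell_\infty^n$, each $\ell_\infty^n$ being equipped with its unit vector basis (a normalized $1$-unconditional basis). Let $X$ be the space produced by the construction of Section~1 for this choice. Theorem~\ref{thm2.2} then says immediately that $X$ contains no equilateral sequence, and $X$ is separable because it carries the Schauder decomposition $(X_n)_{n\ge1}$ with $\dim X_n<\infty$ (Proposition~\ref{prop1.4}). So two points remain to be verified: that $X$ has an unconditional basis, and that $X$ is not isomorphic to a subspace of $l_1$.

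For the first point I would concatenate the unit vector bases of the blocks into a single sequence and claim it is a $1$-unconditional basis of $X$. It is a Schauder basis of $X$ since $(X_n)_{n\ge1}$ is a monotone Schauder decomposition of $X$ (Proposition~\ref{prop1.4}) and each block basis is a basis of the corresponding $X_n$. For unconditionality, note that by construction the norm of a vector $x=(x_1,\dots,x_n,\dots)\in X$ depends only on the scalar sequence $(\norm{x_1}_1,\dots,\norm{x_n}_n,\dots)$, and the recursive formula defining $\norm{(x_1,\dots,x_m)}$ shows that each partial norm is nondecreasing in each of these coordinates; since $\norm x=\sup_m\norm{(x_1,\dots,x_m)}$, the same monotonicity holds for $\norm x$. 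Because each block basis is $1$-unconditional, damping the coordinates inside the blocks by scalars of modulus $\le 1$ does not increase any $\norm{x_n}_n$, hence does not increase $\norm x$; this is exactly $1$-unconditionality of the concatenated basis.

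For the second point, Proposition~\ref{prop1.3} gives that $X$ is $2$-isomorphic to $Z=(\sum_{n=1}^{\infty}\oplus\ell_\infty^n)_1$, and $Z$ contains $\ell_\infty^n$ isometrically for every $n$. If $X$ were isomorphic to a subspace of $l_1$, then $Z$ would have cotype $2$, since $l_1$ has cotype $2$ and this property passes to subspaces and is stable under isomorphism. But testing the cotype-$2$ inequality in $\ell_\infty^n$ on the unit vectors gives $C_2(\ell_\infty^n)\ge\sqrt n\to\infty$, so $Z$ does not have cotype $2$ (equivalently, $c_0$ is finitely representable in $Z$, so $Z$ has trivial cotype by the Maurey--Pisier theorem). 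This contradiction finishes the proof; the facts used about $l_1$ and cotype are the standard results \cite{1}, \cite{6}.

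The only non-routine ingredient is the last step: the absence of equilateral sequences, separability and the unconditional basis are immediate from Theorem~\ref{thm2.2} and Section~1, whereas ruling out an isomorphic embedding of $X$ into $l_1$ is where an external input is needed, namely the cotype gap between $l_1$ and an $\ell_1$-sum of the $\ell_\infty^n$. One could equally use any finite dimensional spaces with $1$-unconditional bases whose cotype-$2$ constants are unbounded; the spaces $\ell_\infty^n$ are just the simplest such choice.
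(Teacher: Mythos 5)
Your proof is correct and follows essentially the same route as the paper: apply Theorem~\ref{thm2.2} to finite-dimensional blocks with normalized $1$-unconditional bases whose cotype-$2$ constants are unbounded, then rule out an isomorphic embedding into $l_1$ via the cotype obstruction, using Proposition~\ref{prop1.3} to pass to $Z$. The only difference is that the paper takes $X_n=l_p^n$ with $p>2$ (so $Z$ has cotype $\geq p>2$) while you take $X_n=\ell_\infty^n$; this choice is immaterial, and your verification that the concatenated block bases form a $1$-unconditional basis of $X$ is a detail the paper leaves implicit.
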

\begin{proof}
For $n\geq 1$, we take $X_n=l_p^{n}$, with $p>2$, in Theorem \ref{thm2.2}. Then the Banach space $(X, \norm\cdot)$ does not contain an equilateral sequence. On the other hand, $X$ can not be embedded in $l_1$, because its isomorphic version $Z=(\sum_{n=1}^{\infty}\oplus l_{p}^{n})_1$ has cotype$\geq p>2$ and the space $l_1$ has cotype 2 (see Th. 23, p.98 in \cite{6} and Remark 6.2.11, Th. 6.2.14 in \cite{1}).
\end{proof}

\vspace{0.5cm}
UNIVERSITY OF ATHENS, DEPARTMENT OF MATHEMATICS, \\PANEPISTIMIOUPOLIS, 15784 ATHENS, GREECE\\
\textit{E-mail address: e.glakousakis@gmail.com}\\ \\ \\
UNIVERSITY OF ATHENS, DEPARTMENT OF MATHEMATICS, \\PANEPISTIMIOUPOLIS, 15784 ATHENS, GREECE\\
\textit{E-mail address: smercour@math.uoa.gr}
\end{document}